\documentclass[11pt,letterpaper]{amsart}
\usepackage[T1]{fontenc}
\usepackage{lmodern}
\usepackage{amsmath,amssymb,mathtools}
\usepackage[margin=1.15in]{geometry}
\usepackage{microtype}
\usepackage{enumitem}
\usepackage{xcolor}
\definecolor{linkblue}{RGB}{25,55,95}
\usepackage[colorlinks=true,linkcolor=linkblue,citecolor=linkblue,urlcolor=linkblue,pdfusetitle]{hyperref}
\setlist[enumerate]{label=\textup{(\arabic*)},leftmargin=*,itemsep=2pt}
\numberwithin{equation}{section}
\newtheorem{theorem}{Theorem}[section]
\newtheorem{proposition}[theorem]{Proposition}
\newtheorem{lemma}[theorem]{Lemma}
\newtheorem{corollary}[theorem]{Corollary}
\theoremstyle{definition}

\theoremstyle{remark}
\newtheorem{remark}[theorem]{Remark}
\newcommand{\Z}{\mathbb Z}
\newcommand{\Q}{\mathbb Q}
\newcommand{\C}{\mathbb C}
\newcommand{\R}{\mathbb R}
\newcommand{\F}{\mathbb F}
\newcommand{\Zp}{\Z_p}
\newcommand{\Qp}{\Q_p}
\newcommand{\Fp}{\F_p}
\newcommand{\OO}{\mathcal O}
\newcommand{\cP}{\mathcal P}

\newcommand{\Abar}{\overline A}
\newcommand{\Bbar}{\overline B}
\newcommand{\m}{\mathfrak m}
\newcommand{\pt}{\mathrm{pt}}
\newcommand{\BM}{\mathrm{BM}}
\newcommand{\tors}{\operatorname{tors}}
\newcommand{\length}{\operatorname{length}}
\newcommand{\Sym}{\operatorname{Sym}}
\newcommand{\Frac}{\operatorname{Frac}}

\newcommand{\depth}{\operatorname{depth}}
\newcommand{\pd}{\operatorname{pd}}
\newcommand{\rank}{\operatorname{rank}}
\newcommand{\End}{\operatorname{End}}

\title[Equivariant multiplicities and torsion]{Equivariant multiplicities and torsion \\
at attractive fixed points}

\author{Tao Gui}
\address{(Tao Gui) \newline \indent Institute for Theoretical Sciences, Westlake University, No.\ 600 Dunyu Road, Xihu District, Hangzhou, Zhejiang Province 310030, China}
\email{guitao18(at)mails(dot)ucas(dot)ac(dot)cn}

\author{Peter L. Guo}
\address{(Peter L. Guo) \newline \indent Center for Combinatorics, Nankai University, LPMC, Tianjin 300071, People’s Republic of China}
\email{lguo(at)nankai(dot)edu(dot)cn}

\author{Zhuowei Lin}
\address{(Zhuowei Lin) \newline \indent Center for Combinatorics, Nankai University, LPMC, Tianjin 300071, People’s Republic of China}
\email{zwlin0825(at)163(dot)com}

\date{}
\subjclass[2020]{14L10, 14M15, 55N91, 57T15}
\keywords{Equivariant multiplicities, Schubert varieties, varieties with torus actions, torsion in cohomology, parity sheaves}
\hypersetup{pdftitle={Equivariant multiplicities and torsion at attractive fixed points}}

\begin{document}

\begin{abstract}
Let \(X\) be a rationally smooth complex affine variety with a torus action and an attractive fixed point \(x\). Suppose that \(X\setminus\{x\}\) is \(p\)-smooth and that its integral equivariant cohomology has no \(p\)-torsion. We prove that the order of its total \(p\)-primary cohomology torsion is the \(p\)-part of the reduced numerator of the equivariant multiplicity at \(x\). In particular, this resolves the torsion-order conjecture of Juteau--Williamson and its local version for normal slices in Schubert varieties, using the equivariant torsion-freeness result of Fiebig--Williamson.
\end{abstract}
\maketitle

\section{Introduction}

Let \(X\) be an irreducible $n$-dimensional complex algebraic variety equipped with its classical topology, and let \(p\) be a prime. A point \(x\in X\) is \emph{\(p\)-smooth} if
\[
 H^\bullet(X,X\setminus\{x\};\Fp) \cong H^\bullet(\C^n,\C^n\setminus\{0\};\Fp).
\]
The \emph{\(p\)-smooth locus} of X is the largest open subset consisting of $p$-smooth points. Replacing \(\Fp\) by \(\Q\) or \(\Z\) gives rational smoothness (\(\Q\)-smoothness) and integral smoothness (\(\Z\)-smoothness). As is clear from the definition, these are local topological conditions and it is not easy to decide whether a point \(x\in X\) is $k$-smooth (for $k=\Z, \Q, \Fp)$ in general. 

For varieties with an algebraic torus $T$-action, equivariant multiplicities provide a way of studying $k$-smoothness of $T$-fixed points algebraically. Kumar's criterion detects the smooth and rationally smooth loci of Schubert varieties using equivariant multiplicities \cite{Kumar}. Arabia \cite{Arabia} and Brion \cite{Brion} generalized Kumar's criterion to more general $T$-varieties.

Juteau and Williamson  \cite[Theorem 1.2]{JW} gave a corresponding criterion using the primes dividing the numerator of the equivariant multiplicity to determine \(p\)-smoothness, building on the Kumar--Arabia--Brion criterion for rational smoothness. In particular, their result \cite[Theorem 1.3]{JW} exhibits a combinatorial criterion for the determination of the \(p\)-smooth locus of Schubert varieties for all primes $p$. 

In \cite[Conjecture~1.5]{JW}, Juteau and Williamson predicted that  the entire numerator of the equivariant multiplicity has a topological interpretation. Specifically, \cite[Conjecture 1.5]{JW} states that  in a Schubert variety, for a rationally smooth normal slice $N$ which is smooth away from its distinguished $T$-fixed point $x$,   the numerator $f_x$ of the equivariant multiplicity is equal to the order of the torsion in the integral cohomology $H^\bullet(U;\Z)$ of the punctured slice $U:=N\setminus\{x\}$. Below \cite[Conjecture 1.5]{JW}, they also proposed a $p$-local version of their conjecture as well as an extension to affine $T$-varieties. As a consequence, the numerator of the equivariant multiplicity in these cases is in fact a topological invariant (and not just an invariant of the singularity with $T$-action). Our goal in this paper is to provide proofs of these statements.

Let $R$ be a commutative domain. Let \(T\cong(\C^\times)^r\) be a complex torus, and \(\Lambda=\chi^*(T)\) its character lattice. Write 
\[
S_R:=\Sym_R(\Lambda\otimes_{\Z}R),\quad \text{ with } \deg\Lambda=2,
\]
for the symmetric algebra. Denote by $Q_R$  the fraction field of $S_R$.

Assume that \(X\) is a complex affine irreducible $T$-variety with an attractive (hence unique) $T$-fixed point \(x\). Here attractive means that a cocharacter of \(T\) contracts \(X\) to \(x\). Set \(U=X\setminus\{x\}\) and \(n=\dim_{\C}X\). If \(X\) is rationally smooth, by the Kumar--Arabia--Brion criterion, the \emph{equivariant multiplicity}  $e_xX\in Q_\Z$ of $x$ has a reduced expression (see \cite[(1.1)]{JW})
\begin{equation}\label{eq:intro-mult}
 e_xX=\frac{d}{\pi},\quad \text{ with } \pi=\chi_1\cdots\chi_n,\quad \chi_i\in\Lambda\setminus\{0\},\quad d\in\Z\setminus\{0\}.
\end{equation}
Here the reducedness is taken in \(S_{\Z}\): no prime $p$ divides both \(d\) and \(\pi\). For the reader's convenience, we will give a proof of the existence of \eqref{eq:intro-mult} in Lemma~\ref{lem:multiplicity}.

For a graded module \(M\) over a domain \(R\), write \(\tors_R M\) for the direct sum of its \(R\)-torsion submodules in all degrees. Throughout, \(\Zp\) denotes the ring of \(p\)-adic integers, with the \(p\)-adic valuation \(v_p\). Our first main result  confirms a conjecture of Juteau and Williamson, stated after \cite[Conjecture 1.5]{JW} as an extension of the local version of the conjecture on Schubert slices to affine varieties.

\begin{theorem}\label{thm:affine}
Let \(X\) be an irreducible, rationally smooth complex affine \(T\)-variety of dimension \(n>0\), with an attractive (hence unique) fixed point \(x\). Assume that \(U:=X\setminus\{x\}\) is \(p\)-smooth and that \(H_T^\bullet(U;\Z)\) is free of \(p\)-torsion. With the normalization \eqref{eq:intro-mult}, one has
\begin{equation}\label{eq:affine-order}
\bigl|\tors_{\Zp}H^\bullet(U;\Zp)\bigr|=p^{v_p(d)}.
\end{equation}
More precisely, there are nonnegative integers \(a_1,\ldots,a_{n-1}\) such that
\begin{equation}\label{eq:degreewise}
 H^j(U;\Zp)\cong
 \begin{cases}
 \Zp & j=0,\,2n-1,\\[5pt]
 \Zp/(p^{a_i}) & j=2i,\quad 1\leq i<n,\\[5pt]
 0 & \text{otherwise}.
 \end{cases}
 \qquad \sum_{i=1}^{n-1}a_i=v_p(d),
\end{equation}
Here \(\Zp/(p^0)\) is the zero module.
\end{theorem}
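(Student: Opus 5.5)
We work throughout with coefficients \(\Zp\), write \(S=S_{\Zp}\), and set \(\m=S^{>0}\).

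\textbf{Equivariant cohomology of \(X\) and of the pair.} Since the attractive cocharacter contracts \(X\) to \(x\), the inclusion \(x\hookrightarrow X\) is a \(T\)-equivariant homotopy equivalence. Hence \(H_T^\bullet(X;\Zp)=S\). Rational smoothness at \(x\), combined with the \(p\)-smoothness of \(U\), gives \(p\)-smoothness of \(X\) away from \(x\). Using attractiveness, the local cohomology \(H^\bullet_T(X,U;\Zp)\) is a free \(S\)-module of rank one, generated in degree \(2n\). Denote this generator by \(\tau\).

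The long exact sequence of the pair then reads
\[
0\to H^{\mathrm{odd}}_T(U)\to S\tau \xrightarrow{\ \iota\ } S\to H^{\mathrm{even}}_T(U)\to\cdots.
\]
Here \(\iota(\tau)=c\in S^{2n}\) is the equivariant Euler class of \(x\). Its image is nonzero, since the restriction of \(\iota\) to \(Q_{\Zp}\) is an isomorphism after localization. Consequently \(H^\bullet_T(U;\Zp)\cong S/(c)\), concentrated in even degrees.

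The key identification is \(c=\pi/d\) up to a unit of \(\Zp\), i.e. \(e_xX=1/c\). This is the standard localization formula expressing the equivariant multiplicity as the inverse of the restriction of the equivariant fundamental/Thom class, as in Lemma~\ref{lem:multiplicity}. Since \(c\) lies in \(S\), which is integral over \(\Zp\), the integer \(d\) must divide \(\pi\) in \(S_{\Zp}\). Reducedness then forces \(c = u\,\pi/p^{v_p(d)}\) with \(u\in\Zp^\times\). Here we use that \(\pi\) is divisible by \(p^{v_p(d)}\) in \(S_{\Zp}\), which is automatic given \(c\in S\).

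\textbf{Passing to nonequivariant cohomology.} We have \(H^\bullet_T(U)=S/(c)\), and the hypothesis that \(H_T^\bullet(U;\Z)\) has no \(p\)-torsion is what makes this \(\Zp\)-flat, i.e. \(c\not\equiv 0 \bmod p\). In particular \(\pi/p^{a}\) is primitive, where \(a=v_p(d)\).

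Next we use the Eilenberg–Moore spectral sequence (or Koszul complex) for \(U\to U_T\to BT\). It gives
\[
H^\bullet(U;\Zp)=H\bigl(S/(c)\otimes^L_S\Zp\bigr),
\]
provided it degenerates. Degeneration follows by an argument that, I expect, needs equivariant formality–type input: the complex \(C^\bullet_T(U)\) is quasi-isomorphic to the two-term complex \(S\xrightarrow{c}S\) as a dg \(S\)-module. The hard part is to justify this without assuming freeness over \(S\).

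The natural route is to apply the derived base change \(-\otimes^L_S\Zp\) directly to the triangle
\[
C_T(X,U)\to C_T(X)\to C_T(U).
\]
The first two terms are free rank-one \(S\)-modules, so they are formal, and the map between them is determined by \(c\) up to homotopy. Hence
\[
C(U)\simeq \mathrm{Cone}\bigl(\Zp\otimes^L_S S\xrightarrow{c}\ldots\bigr),
\]
giving \(C(U)\simeq \mathrm{Cone}(K\xrightarrow{c}K)\), where \(K=\Zp\otimes^L_S S\) is just \(\Zp\). That is too naive, and I would instead compute via the derived tensor product of the exact sequence. Concretely,
\[
C^\bullet(U)\simeq \Zp\otimes^L_S\bigl[S\tau\xrightarrow{c}S\bigr] = \Zp\otimes^L_S S/(c).
\]

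\textbf{Koszul computation and the main obstacle.} Compute \(\mathrm{Tor}^S_\bullet(S/(c),\Zp)\) with the Koszul resolution of \(\Zp\) over \(S=\Zp[t_1,\dots,t_r]\). This yields the Koszul homology of \(t_1,\dots,t_r\) on \(S/(c)\). That homology is of the expected shape, \(\Zp\) in degrees \(0\) and \(2n-1\) plus torsion, precisely when one passes to the \(\Lambda\)-grading contributions of the linear factors of \(\pi\). I expect this to be the main obstacle: reducing from rank \(r\) to the right answer. I would attack it by restricting to a generic rank-one subtorus \(\C^\times\subset T\), chosen so that no \(\chi_i\) vanishes and so that the images of the \(\chi_i\) keep their \(p\)-adic content. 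This reduces \(S\) to \(\Zp[t]\) and gives \(c\mapsto u\,(\prod m_i/p^a)\,t^n\).

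For a single variable, the Gysin sequence of the circle bundle gives
\[
H^\bullet(U)=\ker/\mathrm{coker}\text{ of } t\colon \Zp[t]/(N t^n)\to \Zp[t]/(N t^n),
\]
with \(N\) chosen so that \(v_p(N)\) can be computed. This yields \(\Zp\) in degree \(0\) and in degree \(2n-1\) (from the kernel), together with torsion \(\Zp/(N)\). The total order is then \(p^{v_p(N)}\), and the remaining step is to match \(v_p(N)=v_p(d)\), which uses the generic choice of the subtorus. The degreewise splitting into cyclic pieces \(\Zp/(p^{a_i})\) comes from filtering by the powers of \(t\) and applying the same count in each even degree. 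The sum \(\sum a_i = v_p(d)\) then follows from the multiplicativity of orders in that filtration.
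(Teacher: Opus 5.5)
\textbf{There is a genuine gap.} Your argument breaks at its foundation. You claim that $H_T^\bullet(X,U;\Zp)$ is free of rank one over $S$ on a generator of degree $2n$, and this is false exactly when there is torsion. If it held, the characters would be a regular sequence on it, and the Gysin tower would give $H^\bullet(X,U;\Zp)\cong\Zp$ in degree $2n$. Since $X$ is contractible, this says $\widetilde H^\bullet(U;\Zp)\cong\Zp$ in degree $2n-1$: $x$ would be $p$-smooth and $U$ would have no torsion at all.

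The same failure appears algebraically. You need $c=\pi/d\in S_{\Zp}$. But reducedness of \eqref{eq:intro-mult} says that $p\mid d$ implies $p\nmid\pi$, so $\pi/d\notin S_{\Zp}$ (Lemma~\ref{lem:denominator} shows $\pi\bmod p\neq0$ in general). Your requirement that $p^{v_p(d)}$ divide $\pi$ in $S_{\Zp}$ therefore forces $v_p(d)=0$. So $H_T^\bullet(U;\Zp)\cong S/(c)$ is wrong in every case with torsion.

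The correct integral structure is Lemma~\ref{lem:integral-sequence}: $0\to S\xrightarrow{\varphi}B\to A[2n]\to0$ with $d\varphi(1)=\pi\mu$. Here $B=H_T^\bullet(X;\omega_{X,\Zp})$ is a rank-one lattice in $S_{\Qp}\mu$ that is \emph{not} free when $p\mid d$. The torsion of $H^\bullet(U;\Zp)$ comes from this non-freeness, namely from the transition coefficients $tb_j=c_jb_{j+1}$ on $\Bbar$, with $\prod c_j=d/u$.

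Your one-variable endgame shows the same defect. For $M=\Zp[t]/(Nt^n)$, multiplication by $t$ is an isomorphism except from degree $2n-2$ to $2n$. There it is onto with kernel $\Zp\cdot Nt^{n-1}$. So the Gysin sequence gives only $\Zp$ in degrees $0$ and $2n-1$ and no torsion. Moreover $M$ itself has $\Zp$-torsion, which contradicts the hypothesis. What you actually need is an $\OO$-free module of rank $n$ in degrees $0,\dots,2n-2$ on which $t$ acts between consecutive degrees by scalars $c_j$ that are in general not units, as for $\Abar$ in Lemma~\ref{lem:line}.

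Second, the step you call ``the hard part'' is where the real content lies, and the proposal does not supply it. Restricting to a line or subtorus and then forgetting equivariance computes the right thing only if the relevant linear forms are a regular sequence on $A$ or $B$. Otherwise the Eilenberg--Moore or Koszul computation has higher Tor terms and possible differentials.

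In the paper this input is Theorem~\ref{thm:pd}, $\pd_P H_T^\bullet(U;\Fp)=1$. It is proved via an equivariant resolution, Bai--Pomerleano formality, the Bia\l ynicki-Birula decomposition of a semiprojective variety, and a trace splitting. From it:
\begin{enumerate}
\item $A$ is Cohen--Macaulay of dimension $r$ at $(p,S^+)$.
\item Hence $\ell_1,\dots,\ell_{r-1},p$ is regular on $A_{\OO}$, and then $\ell_1,\dots,\ell_{r-1},t$ is regular on $B_{\OO}$.
\item Lemma~\ref{lem:koszul} transfers this regularity back to integral characters, which yields $H^\bullet(X;\omega_{X,\OO})\cong\Bbar/t\Bbar$.
\end{enumerate}

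A rank-one subtorus on which every $\chi_i$ restricts to a $p$-adic unit multiple of $t$ need not exist for small $p$. For $x_1,x_2,x_1+x_2$ with $p=2$, the three kernels cover $\F_2^2$. This is why the paper passes to an unramified extension $\OO$ and uses non-integral linear forms.

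Finally, three endpoint facts need separate input: $H^{2n-1}(U;\Zp)\cong\Zp$, the vanishing of the odd intermediate groups, and the last transition coefficient being a unit. They all rely on $H_0^{\BM}(X)=0$ from the cone structure (Lemmas~\ref{lem:link} and \ref{lem:endpoints}), and your sketch does not address them.
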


As a consequence, we prove the following global version of the Juteau--Williamson conjecture.

\begin{corollary}\label{cor:affine-integral}
In the situation of Theorem~\ref{thm:affine}, assume instead that \(U\) is $\Z$-smooth and \(H_T^\bullet(U;\Z)\) is a free $\Z$-module. Then
\begin{equation}\label{eq:integral-order}
\bigl|\tors_{\Z}H^\bullet(U;\Z)\bigr|=|d|.
\end{equation}
Moreover, every intermediate even cohomology group is finite cyclic, and every intermediate odd cohomology group is zero.
\end{corollary}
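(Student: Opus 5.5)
The plan is to deduce the corollary from Theorem~\ref{thm:affine} one prime at a time, and then to glue the local answers together using finite generation of $H^\bullet(U;\Z)$. First I check that the hypotheses of the theorem hold at every prime $p$. If $U$ is $\Z$-smooth, then by universal coefficients applied to the local cohomology groups $H^\bullet(U,U\setminus\{y\};\Z)$ (which agree with those of $\C^n$ and hence are free), every point of $U$ is $p$-smooth for all $p$. If $H_T^\bullet(U;\Z)$ is free, it has no $p$-torsion for any $p$. Rational smoothness of $X$ and the normalization \eqref{eq:intro-mult} do not depend on $p$. Theorem~\ref{thm:affine} therefore applies to every prime $p$.

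Next I relate $p$-adic and integral cohomology. $U$ is a complex algebraic variety, so it has the homotopy type of a finite CW complex and $H^\bullet(U;\Z)$ is finitely generated in each degree. It vanishes above degree $2n-1$, since $U$ retracts onto a link of real dimension $2n-1$ via the contracting cocharacter. Because $\Zp$ is flat over $\Z$, we get $H^j(U;\Zp)\cong H^j(U;\Z)\otimes_\Z\Zp$. This kills the prime-to-$p$ torsion and keeps the $p$-primary torsion and the rank. From \eqref{eq:degreewise} we can then read off three things. First, $\rank H^j(U;\Z)$ is $1$ for $j=0,2n-1$ and $0$ otherwise. Second, for each $p$ the $p$-primary part of $H^{2i}(U;\Z)$ is $\Z/p^{a_i(p)}$ with $\sum_i a_i(p)=v_p(d)$. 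Third, the odd intermediate groups have no $p$-primary torsion for any $p$.

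Now I assemble the global statement. A finitely generated abelian group of rank $0$ is finite and is the direct sum of its $p$-primary parts. Hence each $H^{2i}(U;\Z)$ with $1\le i<n$ equals $\bigoplus_p \Z/p^{a_i(p)}\cong\Z/m_i$ with $m_i=\prod_p p^{a_i(p)}$, which is finite cyclic by the Chinese remainder theorem. Each intermediate odd group is finite with no $p$-primary part for any $p$, so it is zero. The groups in degrees $0$ and $2n-1$ have rank one and no torsion at any prime, so they are isomorphic to $\Z$. Therefore
\[
\bigl|\tors_\Z H^\bullet(U;\Z)\bigr|=\prod_i m_i=\prod_p p^{\sum_i a_i(p)}=\prod_p p^{v_p(d)}=|d|,
\]
which is \eqref{eq:integral-order}.

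I expect no serious obstacle, since all the content is in Theorem~\ref{thm:affine}. The one step needing care is the first: showing that $\Z$-smoothness implies $p$-smoothness at every prime. This requires the local integral cohomology $H^\bullet(U,U\setminus\{y\};\Z)$ to be concentrated in degree $2n$ and free, so that universal coefficients give the $\Fp$ statement with no Tor contribution. That holds because the definition of $\Z$-smoothness identifies these groups with $H^\bullet(\C^n,\C^n\setminus\{0\};\Z)$.
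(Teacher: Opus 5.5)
Your proposal is correct and follows the paper's proof essentially step for step. You check the hypotheses of Theorem~\ref{thm:affine} at every prime, use finite generation (Lemma~\ref{lem:link}) and flat base change to $\Zp$ to identify the $p$-primary torsion, multiply the orders over all primes, and get cyclicity and the odd vanishing prime by prime. Your only addition is the universal-coefficient justification that $\Z$-smoothness gives $p$-smoothness at every prime, which the paper states without comment.
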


\vspace{10pt}

Now let \(G\) be a connected complex reductive algebraic group, with a Borel subgroup \(B\) and maximal torus $T$, and let \(W\) be its Weyl group. Write
\[
 C_w=BwB/B,\qquad X_w=\overline{C_w},
\]
for the Schubert cell and Schubert variety corresponding to $w\in W$. For \(v\leq w\) in the Bruhat order, let \(N\) be the standard $T$-invariant affine normal slice to \(C_v\) in \(X_w\), with distinguished $T$-fixed point \(x_v:=vB/B\). For later convenience, we take $T$ to be the torus with its character lattice being the root-lattice \(\Z\Phi\), or equivalently, we take $G$ to be the adjoint group with maximal torus $T$. When \(N\) is rationally smooth, by Kumar's criterion \cite{Kumar}, we can denote by \(f_{v,w}\) the (positive) constant numerator appearing in the reduced fraction of the equivariant multiplicity \(e_{x_v}X_w\). The integral formula \eqref{eq:integral} of the following theorem is conjectured (when \(U\) is smooth) in \cite[Conjecture~1.5]{JW} (with the $p$-local version in the paragraph after \cite[Conjecture~1.5]{JW}).

\begin{theorem}\label{thm:schubert}
Suppose that \(N\) is rationally smooth and \(U:=N\setminus\{x_v\}\) is \(p\)-smooth. Then
\[
\bigl|\tors_{\Zp}H^\bullet(U;\Zp)\bigr|=p^{v_p(f_{v,w})}.
\]
For \(n=\dim_{\C}N>0\), the description \eqref{eq:degreewise} holds with \(v_p(f_{v,w})\) in place of \(v_p(d)\). If \(U\) is $\Z$-smooth, then
\begin{equation} \label{eq:integral}
\bigl|\tors_{\Z}H^\bullet(U;\Z)\bigr|=f_{v,w}.
\end{equation}
\end{theorem}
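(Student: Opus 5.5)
The plan is to deduce Theorem~\ref{thm:schubert} from Theorem~\ref{thm:affine} and Corollary~\ref{cor:affine-integral}. The work is to check that the normal slice \(N\), with its \(T\)-action, satisfies all their hypotheses. The rest is bookkeeping: the constant \(d\) of \eqref{eq:intro-mult} for \(X=N\) should be matched with \(f_{v,w}\).

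\textbf{Geometric setup.} Take \(N=X_w\cap v\,U^-B/B\). This is a closed \(T\)-stable subvariety of the affine space \(vU^-B/B\cong\C^{\ell}\), where \(T\) acts linearly with weights \(v(\text{negative roots})\). A regular dominant cocharacter, twisted by \(v\), contracts this affine space to \(x_v\). Hence \(x_v\) is an attractive fixed point of the affine irreducible \(T\)-variety \(N\).

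\textbf{Matching the multiplicity.} Locally near \(x_v\) we have a \(T\)-equivariant product decomposition \(X_w\cong C_v\times N\), where \(C_v\cong\C^{\ell(v)}\) has weights \(v(\beta)\) for suitable roots \(\beta\). Multiplicativity of equivariant multiplicities then gives
\[
e_{x_v}X_w=e_{x_v}N\cdot\prod(\text{weights of }C_v)^{-1}.
\]
So the reduced numerator of \(e_{x_v}N\) coincides with \(f_{v,w}\), up to sign. Rational smoothness of \(N\) and of \(X_w\) at \(x_v\) are equivalent because of the product structure. Since weights are taken in the root lattice, reducedness in \(S_\Z\) is compatible with the paper's normalization. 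For \(n=0\) the statement is trivial.

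\textbf{Equivariant torsion-freeness.} This is the main obstacle: showing that \(H_T^\bullet(U;\Z)\) has no \(p\)-torsion. I would invoke the Fiebig--Williamson result mentioned in the abstract. For the adjoint torus, the \(T\)-equivariant cohomology of the punctured slice is free over \(\Z_{(p)}\), or at least \(p\)-torsion free, whenever \(U\) is \(p\)-smooth. The idea behind this is that \(p\)-smoothness of \(U\) makes the constant sheaf on \(U\) a parity sheaf. Its equivariant stalk and costalk cohomology along the \(T\)-stable strata \(C_y\cap N\) (\(v<y\le w\)) are then free. A filtration by strata (an attractive, cellular-type stratification) yields freeness of \(H_T^\bullet(U;\Zp)\), hence no \(p\)-torsion. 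One must check that the Fiebig--Williamson hypotheses (GKM-type conditions on the root lattice torus, possibly excluding small \(p\)) hold. Using the root lattice as character lattice is exactly what makes their moment-graph argument apply for all \(p\).

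\textbf{Conclusion.} Theorem~\ref{thm:affine} now gives \(|\tors_{\Zp}H^\bullet(U;\Zp)|=p^{v_p(f_{v,w})}\) together with the degreewise description \eqref{eq:degreewise}. For the integral statement, suppose \(U\) is \(\Z\)-smooth. Then it is \(p\)-smooth for every \(p\), so the previous step holds for all \(p\). The groups \(H^\bullet(U;\Z)\) are finitely generated, and \(H^\bullet(U;\Zp)=H^\bullet(U;\Z)\otimes\Zp\). Multiplying the \(p\)-parts over all primes gives \(|\tors_\Z H^\bullet(U;\Z)|=\prod_p p^{v_p(f_{v,w})}=f_{v,w}\), using \(f_{v,w}>0\). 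Alternatively, one can apply Corollary~\ref{cor:affine-integral} directly.
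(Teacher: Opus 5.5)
Your proposal is correct and is essentially the paper's proof. It applies Theorem~\ref{thm:affine} and Corollary~\ref{cor:affine-integral} to \(N\), uses Proposition~\ref{prop:schubert-free} (Fiebig--Williamson) for the coefficient hypothesis, and identifies the reduced numerator with \(f_{v,w}\) via multiplicativity over \(\widetilde N\cong C_v\times N\) and primitivity of roots in \(\Z\Phi\), so the extra denominator has content one. Two small corrections: \(X_w\cap vU^-B/B\) is the product neighborhood \(\widetilde N\), not the slice itself, which is \(N=X_w\cap B^-vB/B\); and your heuristic for the Fiebig--Williamson input should claim only degreewise \(\Zp\)-freeness, since \(H_T^\bullet(U;\Zp)\) is torsion over \(S_{\Zp}\) (annihilated by \(\pi\)).
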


\vspace{10pt}

Theorem \ref{thm:schubert} also holds for Schubert varieties in Kac--Moody flag varieties.

Theorem \ref{thm:schubert} is an application of Theorem~\ref{thm:affine} and Corollary~\ref{cor:affine-integral}, by borrowing  the equivariant torsion-freeness theorem for $H_T^\bullet(N\setminus\{x_v\};\Zp)$ of Fiebig and Williamson \cite[Corollary~8.10]{FW}; the hypotheses and the normalization are explained in Section~\ref{sec:schubert}. 

The proof of Theorem \ref{thm:affine} needs the following structural theorem which does not require rational smoothness at the attractive point.

\begin{theorem}\label{thm:pd}
Let \(X\) be an irreducible complex affine \(T\)-variety of positive dimension with an attractive fixed point \(x\). Put \(U:=X\setminus\{x\}\), \(k=\Fp\), and \(P:=S_k=H_T^\bullet(\pt;k)\). If \(U\) is \(p\)-smooth and \(H_T^\bullet(U;k)\) is torsion over \(P\), then we have the projective dimension 
\[
 \pd_P H_T^\bullet(U;k)=1.
\]
In particular, this module is Cohen--Macaulay of dimension \(r-1\), where $r$ is the rank of the algebraic torus $T$.
\end{theorem}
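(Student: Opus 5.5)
We must show $M:=H_T^\bullet(U;k)$ has projective dimension exactly $1$ over $P=S_k$, a polynomial ring in $r$ variables. The plan is to prove the upper bound $\pd_P M\le 1$ through equivariant Poincaré duality on $U$ together with a contracting-cocharacter (Gysin) description. The lower bound is then automatic: by hypothesis $M$ is torsion and nonzero, since $H^0_T(U)\neq 0$, so it cannot be free. The Cohen--Macaulay statement then follows from Auslander--Buchsbaum, giving $\depth M = r-1$. Because $M$ is torsion, $\dim M\le r-1$, and hence $\dim M=\depth M=r-1$.

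For the upper bound, first reduce to a quotient description. Choose a cocharacter $\lambda$ contracting $X$ to $x$, with $\C^\times$ acting freely up to finite stabilizers on $U$. Then $U$ is $T$-equivariantly homotopic to a $\C^\times$-bundle (a "link") over $L:=U/\lambda(\C^\times)$. Here $L$ is a projective variety with a $T/\lambda$-action. Since $U$ is $p$-smooth, after inverting nothing but working with $\Fp$ we must worry about finite stabilizers of orders divisible by $p$. I would avoid this by using the long exact sequence of the pair $(X,U)$ instead. Since $X$ is $T$-equivariantly contractible to $x$, $H_T^\bullet(X;k)=P$. We also have $H_T^\bullet(X,U;k)=H_T^\bullet(\{x\}, \text{link};k)$, which is computed by the local cohomology at $x$; $p$-smoothness of $U$ does not control that. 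So instead use duality on $U$ itself. Because $U$ is $p$-smooth of real dimension $2n$, equivariant Verdier duality identifies $H_T^\bullet(U;k)$ with equivariant Borel--Moore homology $H^{T,\BM}_{2n-\bullet}(U;k)$. Applying duality to the pair then relates $M$ to $\mathrm{Hom}$/$\mathrm{Ext}$ of the compactly supported version, and the key output is a short exact sequence of graded $P$-modules
\[
0\to P \xrightarrow{\ \cdot e\ } P\to \cdots
\]
In cleaner form, the localization sequence $H_T^\bullet(X,U)\to P\to M\to H_T^{\bullet+1}(X,U)\to 0$ holds, and $H_T^\bullet(X,U;k)\cong H_T^{\BM}$ of the cone, which is free.

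More concretely, the steps are as follows. (1) Show $H_T^\bullet(X,U;k)$ is a free $P$-module. Use that it equals $H_T^\bullet$ with supports at $x$, which by the attractive $\C^\times$-action equals $H_T^{\bullet}_c(X;k)$, and this is dual to $H^{T,\BM}(X)$. (2) Use a spectral-sequence/Ext argument: for $p$-smooth $U$, $H^\bullet_{T,c}(U)$ and $H^\bullet_T(U)$ are related by $\mathrm{Ext}_P$ duality. Since $M$ is torsion, $\mathrm{Hom}_P(M,P)=0$, and the exact sequence of the pair forces $M$ to be a cokernel of a map between free modules $F_1\to F_0$. (3) This map is injective, since its kernel is a submodule of a free module killed by localization (after tensoring with $Q_k$, $M\otimes Q_k=0$, so ranks match and the kernel is torsion-free of rank $0$, hence zero). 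Thus $0\to F_1\to F_0\to M\to 0$ is a free resolution of length $1$.

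The main obstacle is step (1)/(2): proving the relevant relative or compactly supported equivariant cohomology is $P$-free over $\Fp$ using only $p$-smoothness of $U$ and attractiveness, without any purity. I would first try the following. Since $\C^\times$ contracts $X$, $H_{T,c}^\bullet(X)\cong H_T^\bullet(X,U)[\text{shift}]$, and duality on $p$-smooth $U$ gives $H_{T,c}(U)\cong \mathrm{R}\mathrm{Hom}_P(M,P)[2n]$. Since $M$ is torsion, this is concentrated in $\mathrm{Ext}^{\ge1}$. Then the sequence $H_{T,c}(U)\to H_{T,c}(X)\to H_{T,c}(\{x\})=P$ lets one compare Ext-degrees, and a degree count should force $\mathrm{Ext}^i_P(M,P)=0$ for $i\ge2$, hence $\pd M\le1$.
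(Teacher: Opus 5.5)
Your lower bound and your Cohen--Macaulay deduction are correct and agree with the paper. The gap is the upper bound $\pd_P M\le 1$, and step (1), on which it rests, is false in general. Because $M$ is torsion, freeness of $H_T^\bullet(X,U;k)$ would kill the connecting map $M\to H_T^{\bullet+1}(X,U;k)$. Then $P=H_T^\bullet(X;k)\to M$ would be onto, with free (hence principal) kernel, so $M\cong P/(g)$ would be cyclic. (Your displayed sequence ending in ``$\to 0$'' is also wrong: $H_T^{\bullet+1}(X,U;k)\to P$ becomes an isomorphism after localization, so it is not zero.)

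But $M$ need not be cyclic. Take the cone $\{xy=zw\}\subset\C^4$ over $\mathbb P^1\times\mathbb P^1$ with its rank-$3$ torus. All hypotheses hold: $U$ is smooth, and the weights are primitive. Here $M\cong H_T^\bullet(\mathbb P^1\times\mathbb P^1;k)/\bigl(c_1^T(\mathcal O(-1,-1))\bigr)$, so $M/P^{+}M\cong k[a,b]/(a^2,b^2,a+b)$ is $2$-dimensional, where $P^{+}$ is the augmentation ideal. Worse, in the setting of Theorem~\ref{thm:affine}, cyclicity would force $A\cong S/(\pi)$. Then $c_{n-1}$ in Lemma~\ref{lem:lattices} would be $d/u$ up to a unit, and Lemma~\ref{lem:endpoints} would give $p\nmid d$. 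So (1) fails precisely when there is torsion to compute.

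Steps (2)--(3) supply no mechanism either. The claim that ``a degree count should force $\operatorname{Ext}^{\ge 2}_P(M,P)=0$'' is only a hope. In fact no argument using only your ingredients can work: duality on the $\Fp$-homology manifold $U$, the cone structure with its contraction, and torsionness. Consider the open cone on $L=S^3\times S^3\times S^1$, with $T^2$ acting freely on $S^3\times S^3$; a diagonal circle together with the radial flow plays the role of $\lambda$. It has all of these features, yet $H_{T^2}^\bullet(U;k)\cong H^\bullet(\mathbb P^1\times\mathbb P^1\times S^1;k)$ is finite-dimensional, so its projective dimension is $2$.

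The missing idea is an algebro-geometric formality input, which the paper obtains as follows.
\begin{enumerate}
\item Take a projective equivariant resolution $f\colon Y\to X$.
\item $H_T^\bullet(Y;k)$ is free by Bia\l ynicki-Birula and Bai--Pomerleano (Lemma~\ref{lem:open-formality}).
\item $F=f^{-1}(x)$ is the core of the semiprojective variety $Y$, hence equivariantly formal, so $H_T^\bullet(F;\omega_{F,k})$ is free (Lemma~\ref{lem:proper-duality}).
\item $H_T^\bullet(E;k)$ is torsion (Lemma~\ref{lem:torsion-pullback}), which kills the connecting map and yields the length-one free resolution \eqref{eq:resolution-presentation}.
\item The unit--trace composite $k_U\to Rf_{U*}k_E\to k_U$ is the identity, so $M$ is a direct summand of $H_T^\bullet(E;k)$ and inherits $\pd\le 1$.
\end{enumerate}
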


The paper is organized as follows. In Section \ref{sec:preliminaries}, we fix our conventions on the equivariant cohomology and equivariant multiplicities. In Section \ref{sec:geometry}, we give a proof of Theorem \ref{thm:pd}. In the setting of Theorem \ref{thm:affine}, Theorem \ref{thm:pd} gives the regularity needed to specialize over a finite unramified extension of \(\Zp\) to a line. The specialization argument is contained in Section \ref{sec:specialization}. In Section \ref{sec:ordinary}, we provide a proof of Theorem \ref{thm:affine} and Corollary \ref{cor:affine-integral}, and record a consequence on duality when the punctured $U$ is smooth. Section \ref{sec:schubert} is devoted to  the Schubert application and a shorter parity-sheaf proof of its regularity assertion. 

\subsection*{Acknowledgments}
This research was done while the first author was visiting the Center for Combinatorics at Nankai University and he thanks the center for the hospitality. The first author is supported in part by the National Natural Science Foundation of China (No. 12471309). The second author is supported by the National Natural Science Foundation of China (No. 12371329) and the Fundamental Research Funds for the Central Universities (No. 63263094).

\subsection*{AI disclosure}
Generative AI was used interactively in the mathematical exploration, including the development, testing, and refinement of parts of the proof, and as a technical assistant for checking calculations and arguments, locating relevant references, improving the exposition and identifying mathematical and typographical errors. All AI-assisted content was carefully verified by the authors, who take full responsibility for the manuscript.

\section{Equivariant cohomology and multiplicities}\label{sec:preliminaries}

In this paper, all varieties $X$ will be reduced complex algebraic varieties, endowed with their classical (metric) topology. For an algebraic torus \(T\), its maximal compact subtorus \(K\) gives the same Borel equivariant cohomology. If $R$ denotes a ring of coefficients (usually $\Z, \Fp , \Zp$, or $\Qp$), then we write \(D_T(X;R)\) for the equivariant constructible derived category of the $T$-variety $X$, and \(\omega_{X,R}\) for its (both equivariant and nonequivariant by abuse of notation) dualizing complex, with the conventions in \cite{BL,JW}. Thus
\[
 H^{-m}(X;\omega_{X,R})=H_{m}^{\BM}(X;R)
\]
is the Borel--Moore homology of a locally closed space $X$. On a complex \(R\)-homology manifold $X$ of complex dimension \(n\), the complex orientation identifies \(\omega_{X,R}\) with the constant sheaf \(R_X[2n]\). Our grading convention is \(M[a]^j=M^{j+a}\).

For a complex algebraic variety, \(p\)-smoothness gives the orientation isomorphism over both \(\Fp\) and \(\Zp\), and hence over every finite unramified extension of \(\Zp\) (that is, the ring of integers of a finite unramified extension of $\Qp$). One can see this from the local universal coefficient sequence: finite generation of local integral cohomology and Nakayama's lemma lift the local \(\Fp\)-homology-manifold condition to \(\Zp\). The complex orientation trivializes the resulting rank-one orientation local system.

\subsection{Attractive fixed points and their links}
An affine $T$-variety $X$ with an attractive $T$-fixed point x can be contracted to the point \(x\) by a cocharacter \(\lambda\) of $T$; hence there is a closed $T$-equivariant embedding \(X\hookrightarrow V\) taking \(x\) to the origin of the affine space $V$, with all \(\lambda\)-weights positive. Indeed, choosing finitely many homogeneous generators of its positively graded coordinate algebra does the job. Write the positive weights as \(a_1,\ldots,a_m\). For the coordinates $z_j$, $j=1, \dots, m$, of $V$, set
\[
 h(z)=\sum_{j=1}^{m}|z_j|^2,\qquad L=X\cap h^{-1}(1).
\]
The squared Hermitian norm $h$ may be chosen \(K\)-invariant after averaging over $K$. We record the following lemma.

\begin{lemma}\label{lem:link}
Let \(X\) be an irreducible complex affine \(T\)-variety of positive dimension with an attractive fixed point \(x\). Put \(U:=X\setminus\{x\}\). Then the link \(L\) is a compact semialgebraic space of real dimension \(2n-1\), and
\[
 L\times\R_{>0}\longrightarrow U,\qquad (\ell,t)\longmapsto\lambda(t)\ell
\]
is a \(K\)-equivariant homeomorphism. The variety \(X\) is the open cone on \(L\), including its vertex. Consequently \(U\) has the homotopy type of a finite CW complex of dimension at most $2n - 1$, and 
\[
 H^j(U;R)=0\quad(j>2n-1),\qquad H_0^{\BM}(X;R)=0.
\]
\end{lemma}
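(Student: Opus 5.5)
The plan is to build everything from the contracting cocharacter \(\lambda\) and the closed equivariant embedding \(X\hookrightarrow V\) described just before the lemma. The key preliminary fact is that \(t\mapsto h(\lambda(t)z)=\sum_j t^{2a_j}|z_j|^2\) is strictly increasing on \(\R_{>0}\) for every \(z\neq 0\): each \(a_j>0\), and some \(z_j\neq 0\). It tends to \(0\) as \(t\to 0\) and to \(\infty\) as \(t\to\infty\). Hence every \(\lambda(\R_{>0})\)-orbit in \(V\setminus\{0\}\) meets the unit sphere \(h^{-1}(1)\) in exactly one point, and the hitting time \(t(z)\) with \(h(\lambda(t(z))z)=1\) depends continuously (indeed real-analytically, by the implicit function theorem, since \(\partial_t\) of the function is positive) on \(z\). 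This gives the inverse map \(z\mapsto(\lambda(t(z))z,\,t(z)^{-1})\). Since \(X\) is closed and \(T\)-stable, this restricts to a homeomorphism \(L\times\R_{>0}\to U\). \(K\)-equivariance holds because \(h\) is \(K\)-invariant and \(K\) commutes with \(\lambda(\R_{>0})\subset T\).

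For the remaining assertions I proceed in order:

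\begin{enumerate}
\item \(L=X\cap h^{-1}(1)\) is closed and bounded in \(V\cong\R^{2m}\), hence compact. It is semialgebraic as the intersection of an algebraic set with a real-algebraic sphere.
\item Its real dimension is \(\dim_\R U-1=2n-1\), using the homeomorphism and invariance of semialgebraic dimension under semialgebraic homeomorphisms. The map is semialgebraic up to reparametrizing \(t\), or one can argue with local dimension.
\item The cone statement follows by extending the homeomorphism to \(L\times[0,\infty)/(L\times\{0\})\to X\), sending the vertex to \(x\). Continuity at the vertex comes from \(\lambda(t)\ell\to 0\) uniformly in \(\ell\in L\) as \(t\to0\), by compactness of \(L\). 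Properness is seen the same way.
\item Compact semialgebraic sets admit finite semialgebraic triangulations, so \(L\) is a finite simplicial complex of dimension \(2n-1\). Then \(U\simeq L\) gives the finite CW homotopy type and \(H^j(U;R)=0\) for \(j>2n-1\).
\end{enumerate}

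For the last vanishing, \(X\) is homeomorphic to the open cone \(cL\). Borel--Moore homology of an open cone satisfies \(H^{\BM}_m(cL)\cong \widetilde H_{m-1}(L)\), via the long exact sequence of the pair with the closed cone, or via one-point compactification to the suspension. For \(m=0\) this gives \(\widetilde H_{-1}(L)=0\), because \(L\neq\emptyset\) when \(n>0\). More directly, \(H_0^{\BM}(X)\) vanishes because the ray \([0,\infty)\) gives a proper path from any point to infinity, so every point class is a boundary.

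The main obstacle is the dimension claim and the semialgebraic bookkeeping, since \(\lambda(t)\) with real \(t\) involves powers \(t^{a_j}\). The intended fix is to substitute \(s=t^{1/N}\) so the map becomes polynomial in \(s\) and everything stays semialgebraic.
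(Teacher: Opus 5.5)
Your proposal is correct and follows the paper's own proof essentially step for step. Both arguments use strict monotonicity of \(t\mapsto h(\lambda(t)z)\) to get a unique hitting point on \(L\), and uniform convergence to the vertex and escape to infinity for the cone structure. Both then finish with semialgebraic triangulation of \(L\) and the suspension description of the one-point compactification for \(H_0^{\BM}(X;R)=0\); you simply supply more detail than the paper does.

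One small remark: the reparametrization \(s=t^{1/N}\) is unnecessary. The \(\lambda\)-weights \(a_j\) are positive integers, so \((\ell,t)\mapsto\lambda(t)\ell\) is already polynomial. It is therefore a semialgebraic homeomorphism onto \(U\), and the dimension count needs no further bookkeeping.
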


\begin{proof}
On each $\lambda(0,\infty)$-orbit in \(U\),
\[
 h(\lambda(t)z)=\sum_j t^{2a_j}|z_j|^2
\]
is strictly increasing from zero to infinity. Thus each orbit meets \(L\) once. The radial parameter $t$ varies continuously, and positivity of the weights gives uniform convergence to the vertex as \(t\to0\) and uniform escape to infinity as \(t\to\infty\). This proves the product and cone descriptions. Semialgebraic triangulation gives the finite CW assertion and the cohomology dimension bound of $U$. The one-point compactification of \(X\) is the suspension of the nonempty space \(L\), so its degree-zero Borel--Moore homology is zero.
\end{proof}

In particular, ordinary cohomology of $U$ is finitely generated over the coefficient ring $R$ in each degree. The Leray--Serre spectral sequence of the Borel fibration then makes \(H_T^\bullet(U;R)\) finite over \(S_R=H_T^\bullet(\pt;R)\). In particular, all cohomology modules here are bounded below and degreewise finite for the coefficient rings. The same finiteness for the dualizing module below follows from the closed-open long exact sequence, see~\cite[(4.1) and (4.2)]{JW}.

\vspace{10pt}

\subsection{The reduced equivariant multiplicity}
For any irreducible complex algebraic variety $X$ of complex dimension $n$, the canonical complex orientation of the smooth locus $X^{\text{reg}}$ defines the \emph{equivariant fundamental class} (see \cite[p.2624]{JW})
\[
 \mu_X\in H_T^{-2n}(X;\omega_{X,\Z}).
\]
In the attractive situation, \(x\) is the unique fixed point. Following \cite[Definition 4.1]{JW}, the \emph{equivariant multiplicity} $e_xX\in Q_\Z=\Frac(S_{\Z})$ of $x\in X$ is defined by the point-pushforward
\[
 i_*(e_xX)=\mu_X,\qquad i:\{x\}\hookrightarrow X,
\]
understood after localization of both sides from the coefficient ring $S_{\Z}$ to the fraction field $Q_\Z$. This definition may also be made with the rational coefficients $\Q$ rather than the integral coefficients $\Z$.

\vspace{10pt}
As promised in the introduction, we give a proof of the existence of the reduced form \eqref{eq:intro-mult} of the equivariant multiplicity.

\begin{lemma}\label{lem:multiplicity}
Suppose that \(X\) is irreducible, affine and rationally smooth of dimension \(n>0\), with an attractive point \(x\). Then
\[
H_T^\bullet(X;\omega_{X,\Q})=S_{\Q}\mu_X, \qquad i_*(1)=g\mu_X,
\]
where \(g\in S_{\Q}\) is nonzero of degree \(2n\). As a consequence, \(e_xX=g^{-1}\). Moreover, \(g\) is a rational scalar times a product of \(n\) integral characters. Hence \eqref{eq:intro-mult} exists.
\end{lemma}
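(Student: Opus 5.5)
Since \(X\) is rationally smooth of dimension \(n\), the orientation isomorphism \(\omega_{X,\Q}\cong\Q_X[2n]\) holds on all of \(X\). This gives
\[
H_T^\bullet(X;\omega_{X,\Q})\cong H_T^{\bullet+2n}(X;\Q).
\]
By Lemma~\ref{lem:link}, \(X\) is the open cone on \(L\) and is \(T\)-equivariantly contractible to \(x\), with the contraction given by \(\lambda(t)\), \(t\to 0\). Hence \(H_T^\bullet(X;\Q)\cong S_\Q\), free of rank one. The generator in degree \(-2n\) of \(H_T^\bullet(X;\omega_{X,\Q})\) corresponds to \(1\). I will check that this generator is \(\mu_X\) up to a nonzero rational scalar. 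Restricting to the smooth locus \(X^{\rm reg}\), \(\mu_X\) restricts to the orientation class, and the orientation isomorphism on \(X\) extends that on \(X^{\rm reg}\). So \(\mu_X\) is a generator, possibly after rescaling the identification, and \(H_T^\bullet(X;\omega_{X,\Q})=S_\Q\mu_X\).

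Next, \(i_*(1)\) lies in \(H_T^{0}(X;\omega_{X,\Q})\), so \(i_*(1)=g\mu_X\) with \(g\in S_\Q\) homogeneous of degree \(2n\). To see \(g\neq0\), apply \(i^*\). Under the orientation, \(i^*\mu_X\) corresponds to \(1\in S_\Q\), and \(i^*i_*(1)\) is the equivariant Euler-type class of \(x\) in \(X\). After localization, \(i^*\) and \(i_*\) become isomorphisms, because \(x\) is the unique fixed point (localization theorem). Therefore \(i_*(1)\neq 0\) in \(Q_\Q\otimes H_T(X;\omega)\), so \(g\ne0\). Comparing with \(i_*(e_xX)=\mu_X\) after localization, and using \(S_\Q\)-linearity of \(i_*\), we get \(i_*(g\,e_xX)=g\mu_X=i_*(1)\). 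Injectivity of the localized \(i_*\) then gives \(e_xX=g^{-1}\).

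The main point is the factorization of \(g\) into characters. I would first try the Kumar--Arabia--Brion route: \(g=i^*i_*(1)\) is the image of \(1\) under \(S_\Q=H_T(\pt)\to H_T(X;\omega)\cong H_T^{\bullet+2n}(X)\to H_T(\pt)\). This map is the equivariant Euler class of the "rational tangent space", i.e.\ of the local cohomology \(H_T^{2n}(X,X\setminus\{x\};\Q)\) viewed as an \(S_\Q\)-module. To compute it, use localization to one-dimensional subtori. For a codimension-one subtorus \(T'\subset T\) with character \(\chi\) vanishing on it, \(g\) is nonzero modulo \(\chi\) unless \(X^{T'}\neq\{x\}\). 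Combining this with the long exact sequence for \((X,U)\), \(g\) divides the product of the equivariant Euler classes of the \(T\)-stable curves through \(x\), or of the components \(X^{T'}\). A cleaner version uses the fact that \(H_T^\bullet(U;\Q)\cong S_\Q/(g)\) up to shift, from the Gysin sequence \(H_T^{\bullet-2n}(\pt)\xrightarrow{g}H_T^\bullet(\pt)\to H_T^\bullet(U)\). The support of \(H_T(U;\Q)\) lies in the union of the hyperplanes \(\chi=0\) for the finitely many characters \(\chi\) with \(U^{\ker\chi}\neq\emptyset\), since \(U\) has no \(T\)-fixed points. Hence \(V(g)\) is a union of rational hyperplanes, and \(g\) is a scalar times a product of \(n\) primitive integral characters, counted with multiplicity by degree. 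The factorization of \(d/\pi\) in \(S_\Z\) then follows by clearing the scalar and using unique factorization in \(S_\Z\), taking \(d\) as the scalar's reduced integer part.
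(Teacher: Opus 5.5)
Your proposal is correct and is essentially the paper's argument: orientation plus equivariant contraction show that $\mu_X$ generates, the localization theorem at the unique fixed point gives $g\neq0$ and $e_xX=g^{-1}$, and unique factorization in $S_\Q$ followed by Gauss's lemma gives \eqref{eq:intro-mult}; for that last step, absorb the integer numerator of the rational scalar into one character. Your ``cleaner version'' says that $H_T^\bullet(U;\Q)\cong S_\Q/(g)$ is annihilated by a product of nonzero characters, which is just a restatement of the paper's observation that $g$ becomes a unit once nonzero characters are inverted, so you can drop the vaguer first attempt through $T$-curves and codimension-one subtori.
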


\begin{proof}
Rational smoothness identifies the dualizing complex $\omega_{X,\Q}$ with \(\Q_X[2n]\), and contraction identifies the $T$-equivariant cohomology of \(X\) with \(S_{\Q}=H_T^\bullet(pt;\Q)\). Thus the equivariant fundamental class $\mu_X$ is a generator of $H_T^\bullet(X;\omega_{X,\Q})$ in degree \(-2n\). By \cite[Lemma 4]{Brion} or \cite[Lemma 3.4]{FW} combined with standard arguments on the open/closed long exact sequence (see, e.g., \cite[p.2625]{JW}), localization after inverting nonzero characters makes the point-pushforward an isomorphism, since \(x\) is the only $T$-fixed point. It follows that \(g\ne0\), hence \(g\) becomes a unit in that localization, and that \(e_xX=g^{-1}\).

Unique factorization in \(S_{\Q}\) now shows that the irreducible factors of \(g\) are linear character factors. Choose primitive integral representatives \(\eta_1,\ldots,\eta_n\), with possible repetitions, and write
\[
 g=\frac{a}{b}\eta_1\cdots\eta_n, \qquad a,b\in\Z\setminus\{0\},\quad \gcd(a,b)=1.
\]
Absorb \(a\) into \(\eta_1\). Gauss's lemma for primitivity shows that \(b/(a\eta_1\cdots\eta_n)\) is reduced in \(S_{\Z}\), as required.
\end{proof}

For a smooth $T$-variety $X$, our conventions for $e_xX$ give the reciprocal of the product of the tangent weights of $T$ at $x\in X$, see also \cite[Remark 6.1]{JW}.

\vspace{10pt}

\section{Equivariant cohomology of projective dimension one}\label{sec:geometry}

In this section, we give a proof of Theorem~\ref{thm:pd}. Write \(P:=S_k=H_T^\bullet(\pt;k)\) for a field $k$. 

\subsection{Equivariant formality}
Recall that an action of $T$ on $X$ is \emph{equivariantly formal over \(k\)} if $H_T^\bullet(X;k)$ is a free $P$-module~\cite{GKM}. In particular, the forgetful map \(H_T^\bullet(X;k)\to H^\bullet(X;k)\) is onto. 

For a space $X$ of finite cohomological type, we denote the total Betti number over $k$ as
\[
 \beta_k(X)=\sum_j\dim_k H^j(X;k).
\]
We need the following consequence of the Bia\l ynicki-Birula decomposition.

\begin{lemma}\label{lem:bb}
Let \(X\) be a complex smooth projective variety with a \(\C^\times\)-action, and let \(F_\alpha\) be its connected fixed components. Then the attracting strata are affine-space bundles over the \(F_\alpha\), which gives a closed filtration of $X$. For every field \(k\),
\begin{equation}\label{eq:bb-betti}
\beta_k(X)=\sum_\alpha\beta_k(F_\alpha).
\end{equation}
\end{lemma}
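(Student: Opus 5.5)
The plan is to derive Lemma~\ref{lem:bb} from the Bia\l ynicki-Birula decomposition together with a counting argument over the field \(k\).

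\emph{Step 1: the decomposition and the filtration.} Since \(X\) is smooth projective, for every \(y\in X\) the limit \(\lim_{t\to0}t\cdot y\) exists. Put
\[
X_\alpha^+=\{y:\lim_{t\to 0}t\cdot y\in F_\alpha\}.
\]
By the Bia\l ynicki-Birula theorem, \(X_\alpha^+\) is a locally closed smooth subvariety. The limit map \(X_\alpha^+\to F_\alpha\) is a Zariski locally trivial affine-space bundle of rank \(d_\alpha\), the number of positive weights on the normal bundle of \(F_\alpha\). Moreover \(X=\bigsqcup_\alpha X_\alpha^+\). The decomposition is filtrable: choose an ample \(\C^\times\)-linearized line bundle, or a moment map for the compact circle. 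Ordering the \(F_\alpha\) by the value of the moment map then gives an ordering \(\alpha_1,\dots,\alpha_s\) such that each \(Z_i=\bigcup_{j\le i}X_{\alpha_j}^+\) is closed. Equivalently, the closure of each stratum lies in the union of the strata with smaller moment value, which follows because the moment map strictly decreases along non-fixed flow lines toward the limit. This yields the closed filtration.

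\emph{Step 2: the inequality.} Use the long exact sequences of compactly supported cohomology for \(Z_{i-1}\subset Z_i\), with open complement \(X_{\alpha_i}^+\). Subadditivity of total dimension along exact sequences gives
\[
\beta_k(X)\le\sum_\alpha \dim_k H_c^\bullet(X_\alpha^+;k).
\]
By the Thom isomorphism for the oriented real vector bundle of rank \(2d_\alpha\) (or by the Leray spectral sequence with fibers \(\C^{d_\alpha}\)), \(H_c^\bullet(X_\alpha^+;k)\cong H_c^{\bullet-2d_\alpha}(F_\alpha;k)\). Since \(F_\alpha\) is compact, this equals \(H^{\bullet-2d_\alpha}(F_\alpha;k)\). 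Hence \(\beta_k(X)\le\sum_\alpha\beta_k(F_\alpha)\).

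\emph{Step 3: the reverse inequality, which is the main obstacle.} One cannot argue via Hodge theory, since \(k\) may have characteristic \(p\). Instead use localization for the circle action. If \(\operatorname{char}k=p>0\), restrict to the subgroup \(\mu_p\subset S^1\). For \(p\) large, or in general after replacing by the \(p\)-torus fixed points, Smith theory gives \(\beta_{\F_p}(X^{\mu_p})\le\beta_{\F_p}(X)\). This is delicate, because \(X^{\mu_p}\) can be larger than \(X^{\C^\times}\).

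The cleaner route I would take is to apply the same BB decomposition to the inverse action \(t\mapsto t^{-1}\). This gives repelling strata \(X_\alpha^-\) of dimension \(\dim F_\alpha+(\text{codim}-d_\alpha)\). Poincaré duality on the compact \(k\)-orientable complex manifold \(X\), together with the transversal intersection of \(X_\alpha^+\) and \(X_\alpha^-\) along \(F_\alpha\), then shows that the connecting maps in Step 2 vanish. Concretely, I would prove by induction on \(i\) that \(H_c^\bullet(Z_i)\to H_c^\bullet(Z_{i-1})\) is surjective. To do so, lift classes supported on the closure of \(X^+_{\alpha_i}\) using a Thom class of the normal bundle of the smooth subvariety \(X_{\alpha_i}^-\) near \(F_{\alpha_i}\). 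Pairing with these classes makes the triangular intersection matrix invertible, which splits the sequences.

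Alternatively, if this gets heavy, the intended argument is presumably the standard one: pass to the \(S^1\)-equivariant cohomology over \(k\). For a projective variety, the boundedness of \(H^\bullet\) and the localization theorem (valid after inverting \(u\in H^2_{S^1}\), since the fixed points of every finite subgroup \(\mu_m\) acting with no \(\C^\times\)-fixed points are handled by the stratification) give \(\beta_k(X)\ge\sum\beta_k(F_\alpha)\). Equality with Step 2 then finishes the proof.
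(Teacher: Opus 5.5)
Your Steps 1 and 2 are fine, apart from a sign slip in Step 1. If the moment map $\Phi$ decreases toward the limit, then $\overline{X^+_\alpha}\subset\{\Phi\ge\Phi(F_\alpha)\}$, so closures pick up strata of \emph{larger} moment value (filtrability is Bia\l ynicki-Birula's theorem anyway).

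The genuine gap is Step 3. The reverse inequality $\beta_k(X)\ge\sum_\alpha\beta_k(F_\alpha)$, i.e.\ the vanishing of every connecting map along the filtration, is the whole content of the lemma beyond the decomposition. None of your three sketches establishes it.
\begin{itemize}
\item The localization route is wrong as justified. With $\F_p$-coefficients an orbit $S^1/\mu_{p^a}$ has equivariant cohomology $H^\bullet(B\mu_{p^a};\F_p)$, on which $u$ acts injectively. So inverting $u$ only localizes onto $X^{\mu_p}$, which can be strictly larger than $X^{\C^\times}$; this is the $t\cdot z=t^pz$ phenomenon the paper itself points out. ``Handled by the stratification'' is not an argument.
\item The opposite-strata route does not say what is being paired. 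A Thom class of the normal bundle of $X^-_{\alpha_i}$ near $F_{\alpha_i}$ is only a local class. You give no construction of global classes on $Z_i$ or $X$, indexed by an arbitrary and typically non-algebraic $c\in H^\bullet(F_{\alpha_i};k)$, whose restriction to $X^+_{\alpha_i}$ is the Thom image of $c$. When $F_\alpha$ is not a point, that construction is exactly the difficulty.
\item The Smith route you abandoned is in fact repairable. An algebraic $\C^\times$-action has only finitely many stabilizers, so $X^{\mu_{p^a}}=X^{\C^\times}$ for $a\gg0$. Applying the $\Z/p$ Smith inequality successively along $\mu_p\subset\mu_{p^2}\subset\cdots\subset\mu_{p^a}$ then gives $\beta_{\F_p}(X^{\C^\times})\le\beta_{\F_p}(X)$, and rational localization handles characteristic $0$.
\end{itemize}

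A more direct fix is a correspondence. Let $\Gamma\subset Z_i\times F_{\alpha_i}$ be the closure of the graph of the limit map $\pi\colon X^+_{\alpha_i}\to F_{\alpha_i}$. Because $F_{\alpha_i}$ is compact, $c\mapsto\mathrm{pr}_{1*}([\Gamma]\cap\mathrm{pr}_2^*c)$ sends $H^\bullet(F_{\alpha_i};k)$ to $H^{\BM}_\bullet(Z_i;k)$. Because $\Gamma$ meets the open set $X^+_{\alpha_i}\times F_{\alpha_i}$ exactly in the graph, the restriction of this class to $X^+_{\alpha_i}$ is $[X^+_{\alpha_i}]\cap\pi^*c$, and these classes exhaust $H^{\BM}_\bullet(X^+_{\alpha_i};k)$. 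Hence $H^{\BM}_\bullet(Z_i;k)\to H^{\BM}_\bullet(X^+_{\alpha_i};k)$ is split surjective, all connecting maps vanish, and the equality follows. This needs neither opposite strata nor Poincar\'e duality on $X$.

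The paper does not argue in cohomology at all. It quotes Brosnan's decomposition of the \emph{integral} Chow motive of $X$ as a direct sum of Tate twists of the motives of the $F_\alpha$, an integral correspondence statement of the same flavor. Betti realization with coefficients in $k$ then gives a graded isomorphism between $H^\bullet(X;k)$ and a sum of shifted copies of the $H^\bullet(F_\alpha;k)$, for every field at once and regardless of torsion in $H^\bullet(F_\alpha;\Z)$.
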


\begin{proof}
The filtration and affine bundle structure follow from the Bia\l ynicki-Birula decomposition~\cite{BB}. Its integral Chow-correspondence form gives a direct-sum decomposition (the ``motivic Bia\l ynicki-Birula decomposition'') into Tate twists of the fixed components~\cite[Theorem~3.3 and Remark~2.1]{Brosnan}. Applying Betti cohomology with coefficients in \(k\) gives \eqref{eq:bb-betti}.
\end{proof}

\begin{remark}
The correspondences in this result have integral coefficients \cite[Remark~2.1]{Brosnan}. Thus \eqref{eq:bb-betti} holds even when the integral cohomology of a fixed component has torsion. 
\end{remark}

We need the following somewhat surprising result due to Bai and Pomerleano~\cite{BP}, which in characteristic $0$ is a consequence of Deligne’s proof that the Leray--Serre spectral sequence degenerates for smooth projective morphisms between complex algebraic varieties~\cite{Deligne}.

\begin{lemma}\label{lem:projective-formality}
Every algebraic torus action on a complex smooth projective variety is equivariantly formal over every field.
\end{lemma}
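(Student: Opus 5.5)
The plan is to reduce from $T$ to a generic one-parameter subgroup and then run an induction along the Bia\l ynicki-Birula stratification, using Lemma~\ref{lem:bb} to control total Betti numbers. Choose a cocharacter $\C^\times\subset T$ generic enough that $X^{\C^\times}=X^T$, with fixed components $F_\alpha$. Since $T$ is commutative, the Bia\l ynicki-Birula strata $S_\alpha$ are $T$-stable. Each $S_\alpha$ is a $T$-equivariant affine-space bundle over $F_\alpha$, and $T$ acts trivially on $F_\alpha$, so
\[
H_T^\bullet(S_\alpha;k)\cong H_T^\bullet(F_\alpha;k)\cong P\otimes_k H^\bullet(F_\alpha;k)
\]
is free over $P$. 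Order the strata so that the unions $U_0\subset U_1\subset\cdots\subset U_N=X$ are open and each $S_i$ is a closed smooth $T$-stable subvariety of $U_i$ of complex codimension $c_i$. By equivariant Thom isomorphisms we get long exact Gysin sequences
\[
\cdots\to H_T^{\bullet-2c_i}(S_i;k)\to H_T^\bullet(U_i;k)\to H_T^\bullet(U_{i-1};k)\xrightarrow{\ \delta_i\ } H_T^{\bullet+1-2c_i}(S_i;k)\to\cdots,
\]
together with their nonequivariant analogues.

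\emph{Step 1 (nonequivariant splitting).} The nonequivariant Gysin sequences give $\beta_k(U_i)\le\beta_k(U_{i-1})+\beta_k(F_i)$. Chaining these inequalities gives $\beta_k(X)\le\sum_\alpha\beta_k(F_\alpha)$, and \eqref{eq:bb-betti} says equality holds. Hence every nonequivariant connecting map vanishes, so each sequence is short exact and each restriction $H^\bullet(U_i;k)\to H^\bullet(U_{i-1};k)$ is surjective.

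\emph{Step 2 (induction on $i$).} I will prove that $H_T^\bullet(U_i;k)$ is free over $P$ and that the forgetful map to $H^\bullet(U_i;k)$ is onto. The case $i=0$ is the free module $H_T^\bullet(S_0;k)$ computed above. For the inductive step, the key point is $\delta_i=0$. Once that holds, the equivariant sequence is a short exact sequence of graded $P$-modules whose outer terms are free, so it splits and the middle term is free. Surjectivity onto $H^\bullet(U_i;k)$ then follows from the short exact nonequivariant sequence of Step 1, using compatibility of the forgetful maps with the Gysin sequences and a five-lemma-type argument.

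To show $\delta_i=0$, compatibility with forgetful maps only gives $\delta_i\bigl(H_T(U_{i-1})\bigr)\subset P^{+}\cdot H_T(S_i)$, since the nonequivariant $\delta_i$ vanishes; this is not enough by itself. In characteristic $0$ I would argue by ranks. Borel localization, after inverting all nonzero characters, gives
\[
\rank_P H_T^\bullet(U_i)=\sum_{\alpha\le i}\beta(F_\alpha).
\]
Exactness of the Gysin sequence then forces $\rank\delta_i=0$ after localization. Since the target $H_T^\bullet(S_i;k)$ is free, hence torsion-free, it follows that $\delta_i=0$. Finally, Lemma~\ref{lem:bb} makes the total rank equal to $\beta_k(X)$, which, together with freeness, is the equivariant formality statement.

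\emph{Main obstacle.} The hard case is $\operatorname{char}k=p>0$. There Borel localization for $T$ can fail: orbits $T/\Gamma$ with finite stabilizers $\Gamma$ of order divisible by $p$ contribute $H^\bullet(B\Gamma;\Fp)$, which is not $P$-torsion. So the rank count of the previous step is unavailable, and this is exactly where Bai--Pomerleano's input is needed. I would first try to replace localization by a Poincar\'e-series argument. The Leray--Serre spectral sequence gives the coefficientwise bound
\[
P_{H_T(U_i)}(t)\le \frac{P_{U_i}(t)}{(1-t^2)^r},
\]
while the Gysin sequence gives
\[
P_{H_T(U_i)}(t)=t^{2c_i}P_{H_T(S_i)}(t)+P_{H_T(U_{i-1})}(t)-(1+t^{\mathrm{odd}})\,P_{\operatorname{im}\delta_i}(t).
\]
The aim is a matching lower bound that forces $\operatorname{im}\delta_i=0$. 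If that fails, the fallback is to invoke Bai--Pomerleano's theorem~\cite{BP} directly for the degeneration of the Leray--Serre spectral sequence over $\Fp$, using the steps above only to organize the count.
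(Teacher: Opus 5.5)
\textbf{There is a genuine gap in positive characteristic.} Your Step~2 is fine in characteristic zero. For $k=\Fp$, however, which is the case the paper needs, nothing in the plan proves $\delta_i=0$, and the Poincar\'e-series idea cannot work. Assume $H_T^\bullet(U_{i-1};k)$ is free and Step~1 holds, and write $Q_i(t)$ for the Poincar\'e series of $\operatorname{im}\delta_i$. Then exactness gives $P_{H_T(U_i)}(t)=P_{U_i}(t)/(1-t^2)^r-(1+t^{-1})Q_i(t)$. The Leray--Serre upper bound therefore holds automatically for every $Q_i\ge 0$ and carries no information. The ``matching lower bound'' you want is equivalent to $Q_i=0$, which is the degeneration you are trying to prove, so the argument is circular. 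The usual mechanism for a lower bound is the equivariant Euler class of the normal bundle of $S_i$ being a non-zero-divisor. As you observed, it fails exactly when a normal weight lies in $p\Lambda$, and the plan offers no substitute. So in positive characteristic your proposal is only as good as the fallback citation of \cite{BP}, and that citation is the paper's entire proof. Since \cite[Theorem~1.3]{BP} holds over every field, the stratification and localization steps then add nothing.

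\textbf{How to make the stratification route self-contained.} The missing idea is to construct equivariant lifts directly instead of counting ranks.
\begin{enumerate}
\item Let $\overline{\Gamma}_i\subset F_i\times X$ be the closure of the graph of the retraction $\pi_i\colon S_i\to F_i$. It is $T$-stable, so its fundamental class has an integral $T$-equivariant lift.
\item Consequently the correspondence $a\mapsto \mathrm{pr}_{2*}(\mathrm{pr}_1^*a\cup[\overline{\Gamma}_i])$ lifts to a $P$-linear map $H_T^\bullet(F_i;k)\to H_T^{\bullet+2c_i}(X;k)$ that is compatible with the forgetful maps.
\item Since $S_i$ is closed in $U_i$, this class restricts on $U_i$ to the Gysin image $\iota_{i*}\pi_i^*a$, where $\iota_i\colon S_i\hookrightarrow U_i$. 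Since $\overline{S_i}\cap U_{i-1}=\varnothing$, it vanishes on $U_{i-1}$.
\item Induct along your Gysin sequences. If $b|_{U_{i-1}}$ is a combination of such classes, then the difference is a Gysin image from $S_i$. Hence these classes, over all $i$, span $H^\bullet(X;k)$.
\item $T$ acts trivially on each $F_i$, so every $a$ lifts to $H_T^\bullet(F_i;k)$. Thus the forgetful map $H_T^\bullet(X;k)\to H^\bullet(X;k)$ is onto, and Leray--Hirsch gives freeness over every field, with no localization and no appeal to \cite{BP}.
\end{enumerate}
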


\begin{proof}
This is a special case of the more general Theorem~\cite[Theorem 1.3]{BP}.
\end{proof}

The above lemmas have the following corollary.

\begin{lemma}\label{lem:open-formality}
Let \(Y\) be a $T$-invariant open subvariety of a smooth projective \(T\)-variety \(X\). Suppose that a cocharacter \(\lambda\) has proper fixed locus in \(Y\), and that every point of \(Y\) has a limit in \(Y\) as \(t\to0\). Then
\[
H^\bullet(X;k)\longrightarrow H^\bullet(Y;k) \quad\text{and}\quad H_T^\bullet(Y;k)\longrightarrow H^\bullet(Y;k)
\]
are onto. In particular, \(H_T^\bullet(Y;k)\) is finite graded free over \(P\).
\end{lemma}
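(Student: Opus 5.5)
Since \(X\) is smooth and projective, Lemma~\ref{lem:bb} applies to the \(\C^\times\)-action through \(\lambda\). The plan is to prove surjectivity of \(H^\bullet(X;k)\to H^\bullet(Y;k)\) by counting Betti numbers, and then to deduce the rest from Lemma~\ref{lem:projective-formality}.

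\emph{Step 1: \(Y\) is a union of strata.} Let \(F_\alpha\) be the fixed components of \(\lambda\) and \(X_\alpha^+\) the attracting strata. The fixed locus \(X^\lambda\cap Y\) is open in \(X^\lambda\) and proper by hypothesis, hence closed as well. It is therefore a union of whole components \(F_\alpha\), say for \(\alpha\in A\). Every \(y\in Y\) has its limit \(\lim_{t\to0}\lambda(t)y\) in \(Y\), so \(y\in X^+_\alpha\) for some \(\alpha\in A\). Conversely, if \(\alpha\in A\) and \(z\in X^+_\alpha\), then \(\lambda(t)z\) tends to a point of the open set \(Y\). Hence \(\lambda(t)z\in Y\) for small \(t\), and \(T\)-invariance gives \(z\in Y\). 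Thus \(Y=\bigsqcup_{\alpha\in A}X^+_\alpha\).

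\emph{Step 2: Betti bound.} I would order the strata so that closures of unions of initial segments are closed. Then \(Y\) acquires a filtration by successive closed unions of strata, each stratum an affine bundle of some rank \(d_\alpha\) over \(F_\alpha\). The Gysin/Thom long exact sequences then give
\[
\beta_k(Y)\le\sum_{\alpha\in A}\beta_k(F_\alpha),
\]
with equality iff all connecting maps vanish. The orientation needed for these sequences comes from the complex structure, so it works over any \(k\).

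The harder part is the reverse comparison with \(X\). Let \(Z=X\setminus Y\), which is the union of the remaining strata and is closed. The pair sequence gives
\[
\beta_k(X)\le\beta_k(Y)+\beta_k^{c}(\text{rest}),
\]
where the rest is at most \(\sum_{\alpha\notin A}\beta_k(F_\alpha)\) by the same filtration argument, now applied to cohomology with supports in \(Z\). Lemma~\ref{lem:bb} says \(\beta_k(X)=\sum_\alpha\beta_k(F_\alpha)\). Combining the two inequalities forces equalities everywhere. In particular the long exact sequence of \((X,Y)\) with supports in \(Z\) splits into short exact sequences, so \(H^\bullet(X;k)\to H^\bullet(Y;k)\) is onto. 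This bookkeeping, matching the cohomology with supports in \(Z\) to the sum over \(\alpha\notin A\), is the step I expect to need the most care. The cleanest route is to use the same BB filtration for \(X\) ordered with the \(Z\)-strata first, so that the closed-support terms are genuinely Thom-isomorphic to the \(H^\bullet(F_\alpha)\).

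\emph{Step 3: equivariant surjectivity.} Consider the commutative square
\[
\begin{array}{ccc}
H^\bullet_T(X;k) & \longrightarrow & H^\bullet_T(Y;k)\\
\downarrow & & \downarrow\\
H^\bullet(X;k) & \longrightarrow & H^\bullet(Y;k).
\end{array}
\]
By Lemma~\ref{lem:projective-formality}, \(H_T^\bullet(X;k)\to H^\bullet(X;k)\) is onto. The bottom map is onto by Step 2. Hence \(H_T^\bullet(Y;k)\to H^\bullet(Y;k)\) is onto.

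\emph{Step 4: freeness.} Lift a homogeneous \(k\)-basis of \(H^\bullet(Y;k)\) to elements of \(H_T^\bullet(Y;k)\). The Leray--Serre spectral sequence of the Borel fibration \(Y_T\to BT\) then has \(H^\bullet(Y;k)\) consisting of permanent cycles, so it degenerates. The Leray--Hirsch theorem now gives
\[
H_T^\bullet(Y;k)\cong P\otimes_k H^\bullet(Y;k),
\]
which is finite graded free over \(P\) because \(\beta_k(Y)<\infty\) by Step 2.
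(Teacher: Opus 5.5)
Your proposal is correct and follows essentially the same route as the paper. Both arguments use the same ingredients: $Y$ is a union of whole Bia\l ynicki-Birula strata, Thom isomorphisms plus the Betti count against Lemma~\ref{lem:bb} force the connecting maps to vanish, and the commutative square with Lemma~\ref{lem:projective-formality} gives equivariant surjectivity.

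The difference is only bookkeeping. The paper uses a single closed filtration of $X$ passing through $D=X\setminus Y$, namely the sets $D\cap X_i$ followed by $D\cup X_i$; this is exactly the ordering ``with the $Z$-strata first'' you allude to. You instead split the count into $\beta_k(Y)$, $\beta_k(H^\bullet_Z(X))$ and the long exact sequence of the pair $(X,Y)$.
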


\begin{proof}
The open and proper subvariety \(Y^\lambda\subset X^\lambda\) is a union of connected components. Moreover, under the assumptions, \(Y\) is a union of entire attracting strata. 

Put \(D=X\setminus Y\). Take a closed Bia\l ynicki-Birula filtration \(X_i\) of $X$. The closed sets \(D\cap X_i\), followed by \(D\cup X_i\), give
\[
 \varnothing\subset D\cap X_1\subset \cdots \subset D \subset D \cup X_1\subset\cdots\subset X.
\]
Removing repetitions gives a closed filtration
\[
 \varnothing=Z_0\subset Z_1\subset\cdots\subset Z_s=X,
\]
such that each nonempty successive difference is a whole original stratum, and $D=Z_a$ for some $a$. Set \(W_i=X\setminus Z_i\), \(S_i=Z_i\setminus Z_{i-1}\). Each successive difference \(S_i\) is smooth and closed in the smooth open subvariety \(W_{i-1}\), with complement \(W_i\). If \(c_i\) is the complex codimension of \(S_i\) in \(W_{i-1}\), the Thom isomorphism for affine-space bundles gives
\[
 H^j(W_{i-1},W_i;k)\cong H^{j-2c_i}(S_i;k).
\]
Since the stratum \(S_i\) is an affine-space bundle over its fixed component \(F_i\),  its total Betti number is $\beta_k(F_i)$. Let \(\delta_i^j:H^j(W_i;k)\rightarrow H^{j+1}(W_{i-1},W_i;k)\) be the connecting map. Summing dimensions of the long exact sequence gives
\begin{equation}\label{eq:betti-defect}
 \beta_k(W_{i-1})=\beta_k(W_i)+\beta_k(F_i)-2\sum_j\rank_k\delta_i^j.
\end{equation}
Summing \eqref{eq:betti-defect} from \(i=1\) to $s$, using $W_0 = X, W_s =\emptyset$, and \eqref{eq:bb-betti} shows that all \(\delta_i^j\) vanish. Hence every restriction $H^j(W_{i-1};k)\rightarrow H^{j}(W_i;k)$ is onto, and in particular at $i=a$, \(H^\bullet(X;k)\to H^\bullet(Y;k)\) is onto. The commutativity of restriction and forgetful maps, together with Lemma~\ref{lem:projective-formality}, proves the second assertion.
\end{proof}

\vspace{1pt}

\subsection{Freeness of equivariant dualizing cohomology for proper varieties}
We will need freeness with dualizing coefficients on a possibly singular proper variety.

\begin{lemma}\label{lem:proper-duality}
Let \(X\) be a proper complex \(T\)-variety. If the $T$-action on $X$ is equivariantly formal over a field \(k\), then \(H_T^\bullet(X;\omega_{X,k})\) is finite graded free over \(P\).
\end{lemma}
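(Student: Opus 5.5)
The plan is to compare the two Leray--Serre spectral sequences of the Borel fibration \(X\to X_T\to BT\): one for the constant coefficients \(k_X\) and one for \(\omega_{X,k}\). Equivariant formality will force the first to degenerate, Verdier duality will transfer this degeneration to the second, and degeneration for \(\omega\) then gives freeness by a Leray--Hirsch argument.

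\emph{Step 1 (criterion for freeness).} Since \(T\) is connected, \(BT\) is simply connected and the local systems on \(BT\) are trivial. For \(F\in\{k_X,\omega_{X,k}\}\) the spectral sequence therefore has
\[
E_2=P\otimes_k H^\bullet(X;F)\Longrightarrow H_T^\bullet(X;F).
\]
Here \(H^\bullet(X;F)\) is finite dimensional and bounded, because \(X\) is proper and \(F\) is constructible. I will use the standard fact that \(H_T^\bullet(X;F)\) is free over \(P\) if and only if this spectral sequence degenerates at \(E_2\). If it degenerates, the edge map \(H_T^\bullet(X;F)\to H^\bullet(X;F)\) is onto, and lifting a basis gives \(P\)-linear generators; their freeness follows by comparing with \(E_\infty=E_2\) via a graded Nakayama argument. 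Conversely, if the module is free, comparing graded dimensions against \(E_2\) forces degeneration. So the hypothesis says exactly that the spectral sequence for \(k_X\) degenerates, and I must prove that the one for \(\omega_{X,k}\) does too.

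\emph{Step 2 (finite approximations).} I replace \(BT\) by \(B_N:=(\mathbb P^N)^r\) and \(X_T\) by \(X_N:=(\C^{N+1}\setminus 0)^r\times_T X\). Then \(X_N\) is a proper variety, fibred over the smooth compact manifold \(B_N\) of real dimension \(2rN\) with fibre \(X\). The sheaf \(\omega_X\) glues along this fibration to the relative dualizing complex \(\omega_{X_N/B_N}\), and \(\omega_{X_N}=\omega_{X_N/B_N}[2rN]\). Hence \(H^\bullet(X_N;\omega_{X_N/B_N})\) agrees with \(H_T^\bullet(X;\omega_{X,k})\) in a range of degrees that grows with \(N\). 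The finite-dimensional spectral sequences for \(k\) and for \(\omega_{X_N/B_N}\) over \(B_N\) have \(E_2\)-terms \(H^\bullet(B_N)\otimes H^\bullet(X;k)\) and \(H^\bullet(B_N)\otimes H^\bullet(X;\omega_{X,k})\).

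\emph{Step 3 (duality and dimension count).} Over a field and on the compact space \(X_N\), Verdier duality gives
\[
\dim_k H^\bullet(X_N;\omega_{X_N})=\dim_k H^\bullet(X_N;k).
\]
On \(X\) itself it gives \(\dim H^\bullet(X;\omega)=\dim H^\bullet(X;k)\). Degeneration for \(k\) (Step 1, applied to the truncation, which is compatible for large \(N\)) yields
\[
\dim H^\bullet(X_N;k)=\dim H^\bullet(B_N)\cdot\beta_k(X).
\]
Therefore the \(\omega\)-spectral sequence over \(B_N\) has abutment of the same total dimension as its \(E_2\)-term. Each differential can only lower total dimension, so all differentials vanish. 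This is the same counting trick as in \eqref{eq:betti-defect}.

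\emph{Step 4 (passage to the limit).} Letting \(N\to\infty\), the spectral sequence for \(\omega_{X,k}\) over \(BT\) degenerates in every degree. By Step 1, \(H_T^\bullet(X;\omega_{X,k})\) is then free over \(P\), of finite rank \(\dim H^\bullet(X;\omega_{X,k})=\beta_k(X)\).

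The main obstacle is Step 2. There I must check carefully that the equivariant dualizing complex restricts on \(X_N\) to \(\omega_{X_N/B_N}\), with the stated shift, compatibly with the degree ranges in which the approximation is exact. The cleanest route is to work with \(\omega_{X_N}\) directly, use \(\omega_{B_N}=k_{B_N}[2rN]\), and use base change for the proper map \(X_N\to B_N\). An alternative, if this bookkeeping becomes awkward, is to argue in \(D_T(\pt;k)\simeq\) dg-\(P\)-modules, using formality of \(C^\bullet(BT;k)\). In that setting \(a_*\omega_X=\mathbb D(a_*k_X)\), and there is a universal coefficient spectral sequence
\[
\operatorname{Ext}_P^i\bigl(H_T^\bullet(X;k),P\bigr)\Longrightarrow H_T^\bullet(X;\omega_{X,k}).
\]
When \(H_T^\bullet(X;k)\) is free, this collapses and gives \(H_T^\bullet(X;\omega_{X,k})\cong\operatorname{Hom}_P(H_T^\bullet(X;k),P)\), which is free.
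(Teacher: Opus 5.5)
Your proof is correct, but it takes a genuinely different route from the paper. The paper stays inside $D_T(\pt;k)$. Equivariant lifts of a basis of $H^\bullet(X;k)$ define a morphism $\bigoplus_q k_{\pt}[-q]^{b_q}\to R\pi_*k_X$. This morphism is an isomorphism because the forgetful functor is conservative. Proper Verdier duality then gives $R\pi_*\omega_{X,k}\cong\bigoplus_q k_{\pt}[q]^{b_q}$ directly.

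Your fallback sketch is essentially this argument in module language. The paper's version, however, needs neither a dg-model of $D_T(\pt;k)$ nor formality of $C^\bullet(BT;k)$ in positive characteristic.

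Your main route instead uses ordinary Verdier duality on the compact approximations $X_N$ together with a total-dimension count, in the spirit of \eqref{eq:betti-defect}. Your bookkeeping in Step~2 is right: writing $f\colon X_N\to B_N$, one has $(\omega_{X,k})_{X_N}=\omega_{X_N}[-2rN]=f^!k_{B_N}$. What this route buys is more elementary inputs: no equivariant duality and no conservativity. The cost is that bookkeeping and a weaker conclusion. You get freeness of rank $\beta_k(X)$, whereas the paper gets the splitting of $R\pi_*\omega_{X,k}$ itself, and hence canonically $H_T^\bullet(X;\omega_{X,k})\cong\operatorname{Hom}_P(H_T^\bullet(X;k),P)$.

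Two justifications should be tightened.

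\emph{Step~3.} The count needs degeneration of the $k$-spectral sequence over $B_N$ in \emph{all} degrees. That includes base degrees near $2rN$, where $H^\bullet(B_N;k)$ and $P$ differ, so agreement with the $BT$-spectral sequence in a range of degrees is not enough. Use instead that $H^\bullet(X_N;k)\to H^\bullet(X;k)$ is onto, since it factors the surjective forgetful map $H_T^\bullet(X;k)\to H^\bullet(X;k)$, and then apply Leray--Hirsch. Equivalently, the morphism of spectral sequences from $BT$ to $B_N$ is surjective on $E_2$, so the vanishing of differentials propagates page by page.

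\emph{Step~4.} This step is cleanest as a degreewise count: for fixed $i$ and $N\gg0$, $\dim_k H_T^i(X;\omega_{X,k})=\dim_k H^i(X_N;f^!k_{B_N})=\dim_k\bigl(P\otimes_k H^\bullet(X;\omega_{X,k})\bigr)^i$.

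Finally, in Step~1 the direction ``free implies degeneration'' tacitly uses that the rank equals $\beta_k(X)$, i.e.\ surjectivity of the forgetful map. This is the same standard fact the paper invokes when it says the forgetful map is onto.
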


\begin{proof}
Let \(\pi:X\to\pt\), and choose equivariant lifts of a homogeneous basis of \(H^\bullet(X;k)\). With \(b_q=\dim_k H^q(X;k)\), these classes give a morphism in the equivariant derived category \(D_T(\pt;k)\),
\begin{equation} \label{eq:mor in DC}
     \bigoplus_q k_{\pt}[-q]^{b_q}\longrightarrow R\pi_*k_X.
\end{equation}

After forgetting equivariance \eqref{eq:mor in DC} is an isomorphism on cohomology. The forgetful functor is conservative, so \eqref{eq:mor in DC} is an isomorphism in the equivariant derived category. Proper Verdier duality therefore gives
\[
 R\pi_*\omega_{X,k}\cong \mathbb{D}_T(R\pi_*k_X)\cong\bigoplus_q k_{\pt}[q]^{b_q}.
\]
Equivariant cohomology of the last direct sum is a direct sum of shifts of \(P\), as required.
\end{proof}

\vspace{1pt}

\subsection{Torsion, resolutions, and the proof of Theorem~\ref{thm:pd}}
For the rest of this section, let \(k=\Fp\). 

\begin{lemma}\label{lem:torsion-pullback}
Let \(X\) be an affine \(T\)-variety and let \(U\subset X\) be a $T$-invariant open subvariety. Suppose that \(H_T^\bullet(U;k)\) is \(P\)-torsion. Then \(H_T^\bullet(W;k)\) is \(P\)-torsion for every complex variety $W$ admitting a $T$-equivariant map
\(f: W\to U\).
\end{lemma}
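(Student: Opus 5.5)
The plan is to use the ring structure and module structures induced by pullback. The pullback \(f^*\colon H_T^\bullet(U;k)\to H_T^\bullet(W;k)\) is a homomorphism of graded \(P\)-algebras, and it is unital: it sends \(1_U\) to \(1_W\). Moreover, the \(P\)-module structure on \(H_T^\bullet(W;k)\) comes from the ring map \(P\to H_T^\bullet(W;k)\), and this map factors through \(H_T^\bullet(U;k)\). The key observation is that a module over a commutative unital ring is torsion as soon as its unit is annihilated by a nonzero element of \(P\).

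Since \(H_T^\bullet(U;k)\) is \(P\)-torsion, the unit \(1_U\in H_T^0(U;k)\) is killed by some nonzero \(c\in P\), so that \(c\cdot 1_U=0\). Here \(c\cdot 1_U\) is just the image of \(c\) under \(P\to H_T^\bullet(U;k)\). We may take \(c\) homogeneous: the annihilator of a homogeneous element in a graded module is a graded ideal, so some homogeneous component of \(c\) is also nonzero and lies in it. Pulling back along \(f\), we get
\[
c\cdot 1_W=f^*(c\cdot 1_U)=0 .
\]
For any class \(\alpha\in H_T^\bullet(W;k)\), this gives \(c\alpha=(c\cdot 1_W)\cup\alpha=0\). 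Hence all of \(H_T^\bullet(W;k)\) is annihilated by the single nonzero element \(c\), and in particular it is \(P\)-torsion. When \(H_T^\bullet(U;k)\) happens to be finitely generated, which it is by the remarks after Lemma~\ref{lem:link} when \(U\) is the punctured attractive variety, the same \(c\) kills the whole module. But the argument only needs the unit to be torsion, so finite generation is not used.

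The only point needing care is compatibility. Equivariant pullback along a \(T\)-equivariant map is multiplicative and commutes with the structure maps from \(H_T^\bullet(\pt;k)=P\), because both \(U\) and \(W\) map compatibly to \(\pt\) and \(f\) induces a map of Borel constructions over \(BT\). Neither the affineness of \(X\) nor any geometry of \(W\) beyond the existence of \(f\) enters. I expect no real obstacle; the step most worth stating explicitly is that \(P\) acts on \(H_T^\bullet(W;k)\) through the ring map, so that the annihilation of \(1_W\) propagates to every class.
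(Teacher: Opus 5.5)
Your proof is correct, and it takes a different and much shorter route than the paper. You use only that $f$ induces a map of Borel constructions over $BT$, so $f^*$ is a unital homomorphism of $P$-algebras. Then any nonzero $c\in P$ with $c\cdot 1_U=0$ also satisfies $c\cdot 1_W=0$, and hence $c\alpha=(c\cdot 1_W)\cup\alpha=0$ for every $\alpha$.

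The paper argues geometrically instead, in three steps:
\begin{enumerate}
\item It uses the nonzero annihilator of $1_U$ only to show that $U$ has no $K[p]$-fixed points, via injectivity of $H^\bullet(BK;k)\to H^\bullet(BK[p];k)$.
\item It chooses an equivariant embedding $X\hookrightarrow V$ and covers $U$ by the opens $U_j=\{g_j\neq0\}\cap U$ with $\alpha_j\notin p\Lambda$. It notes that $\overline\alpha_j$ vanishes on $W_j=f^{-1}(U_j)$ because $g_j$ trivializes the character line there.
\item A relative cup-product argument then shows that $\prod_j\overline\alpha_j$ kills $H_T^\bullet(W;k)$.
\end{enumerate}

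The paper's route produces an explicit annihilator, a product of mod-$p$ weights. Together with its first step, it effectively characterizes $P$-torsion for open $T$-stable subsets of affine $T$-varieties by the absence of $K[p]$-fixed points. That is the phenomenon behind Remark~\ref{rem:finite-isotropy}. None of this is needed for the statement as it is used in Lemma~\ref{lem:resolution}.

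Your route needs neither the affineness of $X$ nor the embedding. It shows directly that $\operatorname{Ann}_P H_T^\bullet(U;k)\subseteq\operatorname{Ann}_P H_T^\bullet(W;k)$.

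One small point: your aside about finite generation is unnecessary even for $U$ itself. Your argument with $f=\mathrm{id}$ already shows that $c$ annihilates all of $H_T^\bullet(U;k)$ with no finiteness hypothesis.
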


\begin{proof}
Suppose that $T$ is of rank r and let $K\cong(S^{1})^r$ be the compact subtorus of $T$. Let \(K[p]\cong(\Z/p)^r\) be the full subgroup of elements of order dividing \(p\) in $K$. There are no \(K[p]\)-fixed points in \(U\). To see this, a nonzero polynomial in $P$ annihilates $1\in H_T^\bullet(U;k)$, because $H_T^\bullet(U;k)$ is $P$-torsion. If $u\in U$ were fixed by $K[p]$, the composite given by restriction to $u$
\[
P \to H_T^\bullet(U;k) \to H_{K[p]}^\bullet(U;k) \to H^\bullet(BK[p]; k)
\]
would be the standard restriction from the classifying space $BK$ to $BK[p]$. That restriction is injective. Indeed, for odd p, the degree-two Chern classes map to the polynomial generators of $H^\bullet(BK[p]; k)$; for $p= 2$, they map to the squares of independent degree-one generators. In either case the images are algebraically independent. However, a nonzero annihilator of $1$ contradicts this injectivity.

Choose a $T$-equivariant closed embedding of \(X\) into a finite-dimensional $T$-representation $V$, with coordinate eigenfunctions \(g_j\) and character weights \(\alpha_j\). The opens
\[
 U_j=\{g_j\ne0\}\cap U,\qquad \alpha_j\notin p\Lambda,
\]
cover \(U\): a point in $V$ for which every nonzero coordinate has weight in \(p\Lambda\) is fixed by \(K[p]\). Hence their inverse images \(W_j\) under $f$ cover \(W\). On \(W_j\), the pullback of the function \(g_j\) is a nowhere-zero section of the associated equivariant character line, so the image of \(\overline\alpha_j\in P\) in $H_T^2(W_j;k)$ is zero. Lift this class to the relative cohomology \(H_T^2(W,W_j;k)\) and take relative cup products.  Because $\cup_j W_j= W$, the product maps to zero in $H_T^\bullet(W;k)$.
Thus
\[
\left(\prod_j\overline\alpha_j\right)H_T^\bullet(W;k)=0.
\]
The product $\prod_j\overline\alpha_j$ is nonzero in the polynomial domain $P$. This proves the lemma.
\end{proof}

\begin{remark}\label{rem:finite-isotropy}
Unlike in characteristic $0$, the absence of \(T\)-fixed points alone does not imply \(P\)-torsion over \(\Fp\). For example, let \(T=\C^\times\) act on \(\C^\times\) by \(t\cdot z=t^p z\). Its equivariant cohomology is \(H^\bullet(B\mu_p;\Fp)\), on which the polynomial subring \(H^\bullet(BT;\Fp)\) acts injectively. The torsion hypothesis in Theorem~\ref{thm:pd} is essential to the following arguments.
\end{remark}

We now assume all the hypotheses of Theorem~\ref{thm:pd}.

\begin{lemma}\label{lem:resolution}
Under the hypotheses of Theorem~\ref{thm:pd}, there is a projective equivariant resolution \(f:Y\to X\) such that, for \(F=f^{-1}(x)\) and \(E=Y\setminus F\), there is an exact sequence
\begin{equation}\label{eq:resolution-presentation}
 0\longrightarrow H_T^\bullet(F;\omega_{F,k})\longrightarrow H_T^\bullet(Y;k)[2n]\longrightarrow H_T^\bullet(E;k)[2n]\longrightarrow0,
\end{equation}
whose first two terms are finite graded free over \(P\).
\end{lemma}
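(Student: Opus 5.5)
The plan is to build \(Y\) from an equivariant compactification, obtain freeness of the first two terms from Lemmas~\ref{lem:open-formality} and~\ref{lem:proper-duality}, and obtain short exactness from the torsion statement of Lemma~\ref{lem:torsion-pullback}. The step I expect to be hardest is the freeness of \(H_T^\bullet(F;\omega_{F,k})\) in the third step below.

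\emph{Construction of \(Y\).} Use the positive-weight embedding \(X\hookrightarrow V\) to compactify \(X\) equivariantly, for instance by taking its closure \(\Bbar\) in the weighted projective space \(\mathbb P(V\oplus\C)\) with the last coordinate of weight \(1\). Choose a \(T\)-equivariant resolution of singularities \(\overline Y\to\Bbar\) that is an isomorphism over the smooth locus. Then set \(Y\) to be the preimage of \(X\), and \(f:Y\to X\) the induced map. By construction \(Y\) is smooth, \(\overline Y\) is smooth projective with \(Y\) open in it, and \(f\) is projective. Put \(F=f^{-1}(x)\) and \(E=Y\setminus F=f^{-1}(U)\).

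\emph{Freeness of \(H_T^\bullet(Y;k)\).} I will apply Lemma~\ref{lem:open-formality} to \(Y\subset\overline Y\) with the contracting cocharacter \(\lambda\).
\begin{enumerate}
\item Every \(y\in Y\) has a limit in \(Y\) as \(t\to0\). The limit of \(\lambda(t)f(y)\) is \(x\), and \(f\) is proper, so the valuative criterion produces a limit in \(f^{-1}(x)=F\subset Y\).
\item The fixed locus \(Y^\lambda\) is proper. Since \(X^\lambda=\{x\}\), we get \(Y^\lambda\subset F\), which is proper.
\end{enumerate}
Lemma~\ref{lem:open-formality} then makes \(H_T^\bullet(Y;k)\) finite graded free over \(P\). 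Because \(Y\) is smooth of dimension \(n\), we have \(\omega_{Y,k}=k_Y[2n]\), so this module is \(H_T^\bullet(Y;\omega_{Y,k})\) up to the shift.

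\emph{Freeness of \(H_T^\bullet(F;\omega_{F,k})\) (main obstacle).} Here \(F\) may be singular, so Lemma~\ref{lem:proper-duality} is the right tool, and I need equivariant formality of \(F\).
\begin{enumerate}
\item Show \(H^\bullet(Y;k)\cong H^\bullet(F;k)\), compatibly with the equivariant restriction. By proper base change, \(H^\bullet(Y;k)=H^\bullet(X;Rf_*k_Y)\). The complex \(Rf_*k_Y\) is \(\lambda\)-equivariant, and \(X\) is a cone contracted to \(x\) (Lemma~\ref{lem:link}). Hence global sections equal the stalk at \(x\), namely \(H^\bullet(F;k)\). Equivalently, \(F\hookrightarrow Y\) is a homotopy equivalence via the \(\lambda\)-flow.
\item The same argument with Borel constructions gives \(H_T^\bullet(Y;k)\cong H_T^\bullet(F;k)\). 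So \(H_T^\bullet(F;k)\) is free and \(F\) is equivariantly formal.
\item Lemma~\ref{lem:proper-duality} then gives that \(H_T^\bullet(F;\omega_{F,k})\) is finite graded free.
\end{enumerate}

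\emph{Exactness.} Consider the closed/open triangle for \(F\subset Y\supset E\) with dualizing coefficients, using \(\omega_{Y,k}=k_Y[2n]\) and \(\omega_{E,k}=k_E[2n]\) since \(E\) is smooth. This gives the long exact sequence
\[
\cdots\to H_T^\bullet(F;\omega_{F,k})\to H_T^\bullet(Y;k)[2n]\to H_T^\bullet(E;k)[2n]\xrightarrow{\ \delta\ } H_T^{\bullet+1}(F;\omega_{F,k})\to\cdots.
\]
Since \(f|_E:E\to U\) is equivariant, Lemma~\ref{lem:torsion-pullback} makes \(H_T^\bullet(E;k)\) \(P\)-torsion. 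The connecting map \(\delta\) is \(P\)-linear from a torsion module to a free module, so \(\delta=0\). This yields injectivity on the left and surjectivity on the right, which is the short exact sequence \eqref{eq:resolution-presentation}.
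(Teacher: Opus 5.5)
Your proposal is correct and has the same structure as the paper's proof. Both use the compactify-and-resolve construction (the paper takes the ordinary projective closure in $\mathbb P(V\oplus\C)$; your weighted closure works equally well). Both then apply Lemma~\ref{lem:open-formality} for the freeness of $H_T^\bullet(Y;k)$, feed the equivariant formality of $F$ into Lemma~\ref{lem:proper-duality}, and kill the connecting map, which goes from a torsion module to a free one, via Lemma~\ref{lem:torsion-pullback}.

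The one real divergence is how you get $H^\bullet(Y;k)\cong H^\bullet(F;k)$. The paper observes that $Y$ is semiprojective with core exactly $F$. It quotes Hausel--Rodriguez-Villegas for an integral isomorphism and passes to $k$ by universal coefficients. You instead write $H^\bullet(Y;k)=H^\bullet(X;Rf_*k_Y)$ and combine two facts: the contracting lemma for $\lambda$-equivariant (conic) complexes on the cone $X$ of Lemma~\ref{lem:link}, and proper base change at $x$. Your route is more self-contained and works directly over the field. The paper's route yields the integral statement at once, but relies on an external theorem.

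One caution concerns your parenthetical ``$F\hookrightarrow Y$ is a homotopy equivalence via the $\lambda$-flow.'' The statement is true, but it is not justified by the flow itself: the limit map $y\mapsto\lim_{t\to0}\lambda(t)y$ is in general discontinuous across Bia\l ynicki-Birula strata. The sheaf-theoretic argument you actually give does not depend on this remark.
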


\begin{proof}
Embed \(X\) equivariantly into a $T$-representation $V$ and take its projective closure \(\overline X\subset \mathbb{P}(V\oplus \C)\). Equivariant resolution in characteristic zero gives a smooth projective \(T\)-variety \(Z\) with a projective birational map $Z\to \overline X$, which is an isomorphism over the smooth locus; one can use the canonical embedded resolution by invariant blow-up centers, see \cite{Wlodarczyk}. Let \(Y\) be the inverse image of \(X\). Then $Y$ is a smooth $T$-invariant open subvariety of $Z$, and $f : Y\to X$ is projective and birational.

Choose a cocharacter \(\lambda\) of $T$ contracting \(X\) to \(x\). Properness of \(f\) implies that every \(y\in Y\) has a limit as $t\to 0$ under $\lambda(t)$. The fixed locus \(Y^\lambda\) lies in the fiber \(F\), so it is proper. Lemma~\ref{lem:open-formality} shows that \(H_T^\bullet(Y;k)\) is finite graded free over $P$.

A smooth quasi-projective variety with proper $\lambda$-fixed locus and with all limits at $t\to 0$ is \emph{semiprojective}; its \emph{core} consists of the points whose limits at $t\to \infty$ also exist, see \cite{HRV}. Here the core is exactly \(F\). Indeed, every point of \(F\) has such a limit because \(F\) is projective. If \(f(y)\ne x\), a nonzero positive-weight coordinate of \(f(y)\) diverges at $t\to \infty$. 

Now, \cite[Theorem~1.3.1]{HRV} gives
\[ 
H^\bullet(Y;\Z)\xrightarrow{\sim}H^\bullet(F;\Z).
\]
This gives isomorphisms of cohomology groups with coefficients in \(k\) by the universal coefficient theorem, and comparison of the Leray--Serre spectral sequences of the Borel fibrations gives the equivariant isomorphism. Thus \(F\) is also equivariantly formal over $k$, and Lemma~\ref{lem:proper-duality} makes its dualizing equivariant cohomology finite graded free over $P$.

For \(i:F\hookrightarrow Y\) and \(j:E\hookrightarrow Y\), we have the closed-open triangle for the dualizing complex on the smooth variety $Y$
\[
 i_*\omega_{F,k}\longrightarrow k_Y[2n]\longrightarrow Rj_*k_E[2n]\longrightarrow.
\]
Lemma~\ref{lem:torsion-pullback}, applied to \(E\to U\), makes \(H_T^\bullet(E;k)\) torsion over \(P\). The connecting maps from $H_T^\bullet(E;k)[2n]$ to $H_T^\bullet(F;\omega_{F,k})[1]$ vanish because the source is $P$-torsion and the target is $P$-free. The long exact sequence therefore yields \eqref{eq:resolution-presentation}.
\end{proof}

We are ready to give a proof of Theorem~\ref{thm:pd}.

\begin{proof}[Proof of Theorem~\ref{thm:pd}]
Lemma~\ref{lem:resolution} gives \(\pd_P H_T^\bullet(E;k)\leq1\). Let \(f_U:E\to U\) be the restriction of the resolution, which is a proper birational morphism. Its unit and proper trace, using the canonical orientations on $E$ and on $U$, give equivariant morphisms
\begin{equation}\label{eq:trace}
 k_U\longrightarrow Rf_{U*}k_E \cong Rf_{U*}\omega_{E,k}[-2n] \longrightarrow\omega_{U,k}[-2n]\cong k_U.
\end{equation}
The composite is the identity over the dense smooth open locus where \(f_U\) is an isomorphism. Since \(U\) is connected,
\[
 \End_{D_T(U;k)}(k_U)=H_T^0(U;k)=k.
\]
Restriction to that open locus determines the scalar. The composite in \eqref{eq:trace} is therefore the identity everywhere.

It follows that \(C:=H_T^\bullet(U;k)\) is a graded $P$-module direct summand of \(H_T^\bullet(E;k)\), and hence also has projective dimension at most one. It is finite, nonzero, and torsion over the domain \(P\), so it cannot be projective. Thus \(\pd_PC=1\). At the homogeneous maximal ideal of $P$, the Auslander--Buchsbaum formula~\cite{AB} gives \(\depth C=r-1\). Since $C$ is torsion over $P$, a nonzero annihilator gives the Krull dimension \(\dim C\leq r-1\). Since the depth never exceeds the Krull dimension, we have \(\dim C= r-1\) and $C$ is Cohen--Macaulay.
\end{proof}

\section{Integral equivariant cohomology and specialization}\label{sec:specialization}

Return to the hypotheses of Theorem~\ref{thm:affine}. Fix a prime \(p\). In this section, we write
\[
 R=\Zp,\qquad F=\Qp,\qquad S=S_R,\qquad
 A=H_T^\bullet(U;R),\qquad B=H_T^\bullet(X;\omega_{X,R}).
\]
Flat change of coefficients and the hypothesis on integral equivariant cohomology imply that \(A\) is \(R\)-torsion-free. For an extension \(R\to R'\), subscripts on modules denote extension of scalars.

\subsection{Fundamental relations and Cohen--Macaulayness}
Let \(i:\{x\}\hookrightarrow X\) and \(j:U\hookrightarrow X\).

\begin{lemma}\label{lem:integral-sequence}
The modules \(A\) and \(B\) are concentrated in even degrees and are degreewise free over \(R\). There is an exact sequence of $S$-modules
\begin{equation}\label{eq:integral-sequence}
 0\longrightarrow S\xrightarrow{\varphi}B
 \longrightarrow A[2n]\longrightarrow0.
\end{equation}
If \(\mu:=\mu_X\) denotes the equivariant fundamental class, then we have the relations
\begin{equation}\label{eq:fundamental}
 B_F\cong S_F\mu, \qquad  A_F\cong S_F/(\pi), \qquad d\varphi(1)=\pi\mu,
 \end{equation}
where $d$ and $\pi$ are the numerator and denominator in the normalization \eqref{eq:intro-mult}, respectively. In particular, \(\mu\in B^{-2n}\) is a primitive generator and \(\pi A=0\).
\end{lemma}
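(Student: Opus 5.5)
The plan is to extract everything from the closed--open triangle for the dualizing complex on \(X\) with respect to \(i:\{x\}\hookrightarrow X\) and \(j:U\hookrightarrow X\):
\[
 i_*i^!\omega_{X,R}\longrightarrow\omega_{X,R}\longrightarrow Rj_*j^*\omega_{X,R}\longrightarrow .
\]
Here \(i^!\omega_{X,R}=\omega_{\pt,R}=R_{\pt}\). By \(p\)-smoothness of \(U\), the orientation discussion of Section~\ref{sec:preliminaries} gives \(j^*\omega_{X,R}=\omega_{U,R}\cong R_U[2n]\) over \(R=\Zp\). Taking \(T\)-equivariant cohomology yields a long exact sequence
\[
 \cdots\to S\xrightarrow{\varphi} B\to A[2n]\xrightarrow{\partial} S[1]\to\cdots,
\]
where \(\varphi=i_*\) is the point pushforward, so \(\varphi(1)=i_*(1)\).

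First I will show that \(\partial=0\), which gives \eqref{eq:integral-sequence}. Over \(F=\Qp\), \(U\) has no \(T\)-fixed points, so the localization theorem used in Lemma~\ref{lem:multiplicity} makes \(A_F\) torsion over \(S_F\). Since \(A\) is \(R\)-torsion-free, \(A\hookrightarrow A_F\), and hence \(A\) is \(S\)-torsion. The target \(S[1]\) is torsion-free over \(S\), so any \(S\)-linear map from \(A[2n]\) to it vanishes.

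Next I will compute everything rationally. By Lemma~\ref{lem:multiplicity}, \(B_F=S_F\mu\) and \(i_*(1)=g\mu\) with \(g=e_xX^{-1}=\pi/d\). This gives \(d\varphi(1)=\pi\mu\) already in \(B\), since \(B\) is \(R\)-torsion-free once we know it is an extension of \(R\)-free modules. Tensoring \eqref{eq:integral-sequence} with \(F\), where \(d\) is a unit, gives \(A_F[2n]\cong S_F\mu/(\pi\mu)\), that is, \(A_F\cong S_F/(\pi)\) after the shift. This module lives only in even degrees. Since \(A\) is degreewise \(R\)-free, using the hypothesis together with flat base change \(\Z\to\Zp\) and finite generation in each degree, \(A\) embeds in \(A_F\) and is therefore concentrated in even degrees. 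Then \(B\) is an extension of the even, degreewise free modules \(A[2n]\) and \(S\), so it is also even and degreewise \(R\)-free.

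For primitivity of \(\mu\), I look at degree \(-2n\) of \eqref{eq:integral-sequence}. There \(S^{-2n}=0\) because \(n>0\), so \(B^{-2n}\cong A^0=H^0_T(U;R)=R\), using that \(U\) is connected by irreducibility. The restriction of \(\mu_X\) to \(U\) is the fundamental class of \(U\). Under the complex orientation \(\omega_{U,R}\cong R_U[2n]\), this class corresponds to \(1\in A^0\). Hence \(\mu\) is a generator of \(B^{-2n}\cong R\).

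Finally, applying the map \(B\to A[2n]\) to \(d\varphi(1)=\pi\mu\) kills the left side and gives \(\pi\cdot 1=0\) in \(A\). Since \(A\) is a ring with unit and the \(S\)-action factors through \(S\to A\), it follows that \(\pi A=0\).

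The step I expect to need the most care is this orientation compatibility: checking that the restriction of \(\mu_X\) is exactly the unit under the \(\Zp\)-orientation isomorphism, and not some non-unit multiple of it. I would handle it by restricting further to the dense smooth locus, where both are the canonical complex orientation. A class in \(A^0=R\) is determined by its restriction to any nonempty open subset, since \(U\) is connected.
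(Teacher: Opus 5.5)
Your argument is correct and follows essentially the paper's route: the closed--open triangle for $\omega_{X,R}$, the rational computation from Lemma~\ref{lem:multiplicity}, descent to $R$ via $R$-torsion-freeness, and degree $-2n$ for primitivity of $\mu$. The differences are minor: you kill the connecting map by a torsion-versus-free argument (localization makes $A$ an $S$-torsion module), whereas the paper first deduces that $A$ is even from $A\hookrightarrow A_F\cong S_F/(\pi)$ and then uses parity; and you get $\pi A=0$ by pushing $d\varphi(1)=\pi\mu$ into $A$ rather than from that embedding, and both variants are valid.
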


\begin{proof}
The \(p\)-smoothness of \(U\) identifies the dualizing complex $\omega_{U,R}$ with $R_U[2n]$ and hence gives the triangle
\begin{equation}\label{eq:dualizing-triangle}
 i_*R_{\{x\}}\longrightarrow\omega_{X,R}
 \longrightarrow Rj_*R_U[2n]\longrightarrow.
\end{equation}
First we use coefficients in \(F\). Rational smoothness and equivariant contraction give
$$H_T^\bullet(X;\omega_{X,F})\cong H_T^\bullet(X;F)[2n] \cong S_F\mu, \quad \text{ with } \deg \mu=-2n.$$
By Lemma~\ref{lem:multiplicity}, point-pushforward is multiplication by \(\pi/d\) relative to \(\mu\). Hence after localization, the equality $ d\varphi(1)=\pi\mu$ holds over $F$. In particular, the point-pushforward is injective. The long exact sequence of the closed-open dualizing triangle therefore gives \(A_F\cong S_F/(\pi)\), concentrated in even degrees. Since \(A\) is \(R\)-torsion-free, it embeds into \(A_F\). Thus \(A\) is even and \(\pi A=0\).

Over \(R\), the parity just proved splits the long exact sequence into \eqref{eq:integral-sequence}. Hence \(B\) is also even and degreewise free, being an extension of two such modules. In degree \(-2n\), restriction identifies \(B^{-2n}\) with \(H_T^0(U;R)=R\); the fundamental class maps to its orientation generator, so $\mu$ is primitive. The equality $ d\varphi(1)=\pi\mu$ holds over \(F\), and therefore over \(R\), since $B$ is also $R$-torsion-free. 
\end{proof}

We need the following Cohen--Macaulayness result.

\begin{lemma}\label{lem:denominator}
One has \(\pi\bmod p\ne0\) in $S_{\Fp}=S/pS$. Moreover,
\[
 H_T^\bullet(U;\Fp)\cong A/pA
\]
is Cohen--Macaulay of dimension \(r-1\), and \(A\) is Cohen--Macaulay of dimension \(r\) at \(\m=(p,S^{+})\), where $S^{+}$ denotes the augmentation ideal generated by homogeneous elements of strictly positive degree.
\end{lemma}

\begin{proof}
If \(\pi=p\pi_1\), reducedness of \eqref{eq:intro-mult} forces the numerator \(d\in R^\times\). The relation \eqref{eq:fundamental} would give
\[
 \varphi(1)=p(d^{-1}\pi_1\mu)\in pB.
\]
But \(A[2n]\) in \eqref{eq:integral-sequence} is torsion-free over the PID \(R\) hence flat, so reduction modulo \(p\) preserves injectivity of the left map. The displayed equality contradicts this injectivity. Hence \(\overline\pi\ne0\).

The coefficient exact sequence and torsion-freeness in each degree identify \(H_T^\bullet(U;\Fp)\) with \(A/pA\). This module is torsion over $S_{\Fp}$ because by Lemma~\ref{lem:integral-sequence} it is annihilated by the nonzero polynomial \(\overline\pi\), so Theorem~\ref{thm:pd} applies. Finally, \(p\) is an \(A\)-nonzero-divisor. It follows that
\[
 \depth_{S_{\m}}A_{\m}
 =1+\depth_{(S/pS)_{(S^{+})}}(A/pA)_{(S^{+})}=r.
\]
The relation \(\pi A=0\) bounds the $S$-dimension of $A$ by \(r\). Depth and dimension must therefore both equal \(r\) and the claimed Cohen--Macaulay property follows.
\end{proof}

\vspace{1pt}

\subsection{Specialization to a generic line over an unramified extension}
The preceding lemma permits an integral specialization. All depth and parameter statements below may be checked at the homogeneous maximal ideal. They then hold for the graded modules themselves: a nonzero finite graded module over a polynomial ring over a local ring cannot vanish after localization at that ideal.

\begin{lemma}\label{lem:line}
There is a finite unramified extension \(\OO/R\) and an \(\OO\)-basis \(\ell_1,\ldots,\ell_{r-1},t\) of the degree-two part of \(S_{\OO}\) such that, with \(J=(\ell_1,\ldots,\ell_{r-1})\),
\begin{equation}\label{eq:line}
 S_{\OO}/J=\OO[t],\qquad \pi\bmod J=ut^n, \qquad u\in\OO^\times.
\end{equation}
The sequence \(\ell_1,\ldots,\ell_{r-1},p\) is regular on \(A_{\OO}\), and $\Abar=A_{\OO}/JA_{\OO}$ satisfies
\begin{equation}\label{eq:line-ranks}
\Abar^{2j}\cong\OO\ (0\leq j<n),\qquad \Abar^j=0\ \text{otherwise}.
\end{equation}
\end{lemma}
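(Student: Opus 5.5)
The plan is to choose the line generically modulo \(p\), then lift the choice to \(\OO\) and use Cohen--Macaulayness to move between fibers.

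\emph{Choice of line.} By Lemma~\ref{lem:denominator}, \(\overline\pi=\overline\chi_1\cdots\overline\chi_n\ne0\) in \(S_{\Fp}\), so every \(\overline\chi_i\) is nonzero in \(\Lambda\otimes\Fp\). A hyperplane-type condition is needed: pick a linear functional \(\overline\phi:\Lambda\otimes\F_q\to\F_q\) that is nonzero on every \(\overline\chi_i\), and is also chosen generically for the regularity condition below. Such a functional exists once \(q\) is large, since we are avoiding finitely many proper hyperplanes and proper Zariski-closed conditions; this is why we pass to \(\F_q\) and its unramified lift \(\OO=W(\F_q)\). Lift \(\overline\phi\) to \(\phi:\Lambda\otimes\OO\to\OO\). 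Set \(J=\ker\phi\), a free summand of rank \(r-1\) with basis \(\ell_1,\ldots,\ell_{r-1}\), and pick \(t\) with \(\phi(t)=1\). Then \(S_\OO/J=\OO[t]\), and each \(\chi_i\equiv\phi(\chi_i)t\) with \(\phi(\chi_i)\in\OO^\times\), because its reduction is nonzero. This gives \(\pi\bmod J=ut^n\) with \(u\in\OO^\times\), which is \eqref{eq:line}.

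\emph{Regularity.} \(A_\OO\) is faithfully flat over \(A\), so Lemma~\ref{lem:denominator} shows that \(A_\OO/pA_\OO=H_T^\bullet(U;\F_q)\) is Cohen--Macaulay of dimension \(r-1\) over \(S_{\F_q}\). For a graded CM module of dimension \(r-1\) over a polynomial ring over an infinite field, a sequence of linear forms is regular exactly when it is a system of parameters, i.e.\ when \(\operatorname{Supp}\cap V(\overline J)=\{0\}\). The support lies in \(V(\overline\pi)\), which is a union of hyperplanes \(V(\overline\chi_i)\). Moreover, \(V(\overline J)\) is the line spanned by a vector dual to \(\overline\phi\). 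So it suffices that this line avoid each \(V(\overline\chi_i)\), i.e.\ that \(\overline\chi_i\) not vanish on it. In the correct duality this is exactly the nonvanishing condition above: the line \(\operatorname{Spec}S/J\) has coordinate \(t\), and \(\overline\chi_i|_{\text{line}}=\phi(\chi_i)t\ne0\). Hence \(\overline\ell_1,\ldots,\overline\ell_{r-1}\) is regular on \(A_\OO/pA_\OO\).

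We then combine this with \(p\) being \(A_\OO\)-regular, working in the graded local setting at \(\m\). Since \(p,\ell_1,\ldots,\ell_{r-1}\) is regular, and \(A_\OO\) is CM so that regular sequences in \(\m\) permute, the sequence \(\ell_1,\ldots,\ell_{r-1},p\) is regular on \(A_\OO\). I expect the main care to be needed here, in passing from permutability in the local ring to the graded module, using the remark preceding the lemma.

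\emph{Ranks.} Since \(p\) is regular on \(\Abar\), \(\Abar\) is degreewise \(\OO\)-free. Its rank in each degree equals \(\dim_{F'}(A_{F'}/JA_{F'})\), with \(F'=\Frac\OO\). We know \(A_{F'}\cong S_{F'}/(\pi)\) by \eqref{eq:fundamental}. Tensoring with \(S/J\) gives \(F'[t]/(ut^n)\), which has one dimension in each degree \(2j\) for \(0\le j<n\). Strictly, we need \(\Abar\otimes F'=A_{F'}/JA_{F'}\), which holds because \(J\)-quotient commutes with flat base change. Hence \(\Abar^{2j}\cong\OO\) for \(0\le j<n\) and \(\Abar^j=0\) otherwise, which is \eqref{eq:line-ranks}.
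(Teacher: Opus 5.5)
Your proposal is correct and follows essentially the same route as the paper: you choose a functional that is nonvanishing on every $\overline\chi_i$ over a residue field with more than $n$ elements and lift it to the unramified $\OO$, you get regularity from a system of parameters on a Cohen--Macaulay module together with permutability of regular sequences in the local ring, and you read off the ranks from $\Abar\otimes F'\cong F'[t]/(t^n)$. The differences are cosmetic. You verify the parameter condition on $A_\OO/pA_\OO$ (dimension $r-1$) and then prepend $p$, whereas the paper verifies it on $A_\OO$ directly. Your ``infinite field'' qualifier is unnecessary: a system of parameters on a Cohen--Macaulay module is regular over any field, including your finite $\F_q$.
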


\begin{proof}
By Lemma~\ref{lem:denominator}, each factor \(\chi_i\) of \(\pi\) is nonzero modulo \(p\). Choose a finite extension \(k'/\Fp\) with \(|k'|>n\). The union of the \(n\) kernels of these linear forms contains at most \(n|k'|^{r-1}<|k'|^r\) vectors. Choose \(\bar v\) outside this union. Let \(\OO\) be the unramified extension of $R$ with residue field \(k'\), lift \(\bar v\) to a unimodular vector \(v\in\OO^r\), and complete it to a basis. Take dual coordinates \(\ell_1,\ldots,\ell_{r-1},t\) with 
\[
 \ell_i(v)=0,\qquad t(v)=1.
\]
Then \(\chi_i\bmod J=\chi_i(v)t\), with every \(\chi_i(v)\) a unit. This proves \eqref{eq:line}, with $u=\prod_i \chi_i(v)$.

Finite unramified extension preserves Cohen--Macaulayness. The quotient \(A_{\OO}/(p,J)A_{\OO}\) is finite-dimensional over $k'$: it is finite over \(k'[t]\) and is annihilated by \(t^n\). Thus \(p,\ell_1,\ldots,\ell_{r-1}\) is a system of parameters on the dimension-\(r\) Cohen--Macaulay module \(A_{\OO}\). Every ordering of a system of parameters on a Cohen--Macaulay module over a Noetherian local ring is regular~\cite[Tag~00N6 and Tag~00LJ]{Stacks}. This proves the claimed regularity.

By Lemma~\ref{lem:integral-sequence} and \eqref{eq:line}, the quotient \(\Abar\) is annihilated by \(t^n\), and is therefore finite over \(\OO\). It is a finitely generated torsion-free module over this discrete valuation ring, hence free. Over the finite unramified extension \(F'=\Frac(\OO)\) of $F=\Qp$, its rationalization is
\[
 \Abar\otimes_{\OO}F'\cong F'[t]/(t^n).
\]
This determines the ranks in \eqref{eq:line-ranks}. 
\end{proof}

\begin{remark}
Passing to a larger residue field can be necessary. For example, the kernels of \(x_1,x_2,x_1+x_2\) cover \(\F_2^2\). The new coordinates in Lemma~\ref{lem:line} are linear forms over \(\OO\); they need not be integral characters. We address this distinction here before forgetting equivariance in the next section.
\end{remark}

Now we have the following calculation.
\begin{lemma}\label{lem:lattices}
The sequence \(\ell_1,\ldots,\ell_{r-1},t\) is regular on \(B_{\OO}\). For \(\Bbar=B_{\OO}/JB_{\OO}\), there are nonzero \(c_0,\ldots,c_{n-1}\in\OO\) such that
\begin{equation}\label{eq:product}
 \prod_{k=0}^{n-1}c_k=d/u
\end{equation}
and a graded decomposition
\begin{equation}\label{eq:quotient}
 \Bbar/t\Bbar\cong \OO b_0\oplus\bigoplus_{j=0}^{n-1}\OO/(c_j)b_{j+1}, \text{ with } \deg b_j=-2n+2j, j=0,1, \dots, n.
\end{equation}
In particular,
\begin{equation}\label{eq:length}
 \length_{\OO}\tors_{\OO}(\Bbar/t\Bbar)=v_p(d).
\end{equation}
\end{lemma}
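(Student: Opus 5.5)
The plan is to transport the short exact sequence \eqref{eq:integral-sequence} to the line $S_{\OO}/J=\OO[t]$ and then read off $\Bbar$ as a graded lattice inside a free rank-one $F'[t]$-module. The first step is regularity. Base change \eqref{eq:integral-sequence} to $\OO$; this is flat, so it stays exact. The sequence $\ell_1,\ldots,\ell_{r-1}$ is regular on $S_{\OO}$ trivially and on $A_{\OO}$ by Lemma~\ref{lem:line}. A standard induction with the snake lemma, using that the Koszul $\mathrm{Tor}_1$ against $A_{\OO}$ vanishes, then gives two things:
\begin{itemize}
\item the sequence $\ell_1,\ldots,\ell_{r-1}$ is regular on $B_{\OO}$;
\item there is an exact sequence of graded $\OO[t]$-modules
\[
0\longrightarrow \OO[t]\xrightarrow{\ \bar\varphi\ }\Bbar\longrightarrow \Abar[2n]\longrightarrow 0 .
\]
\end{itemize}
By \eqref{eq:line-ranks}, $\Abar[2n]$ is $\OO$-free and lives in degrees $-2n,\ldots,-2$. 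Hence $\Bbar$ is degreewise $\OO$-free, and in particular $\OO$-torsion-free.

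To see that $t$ is regular on $\Bbar$, I will embed $\Bbar$ into $\Bbar\otimes_{\OO}F'$. Since $B_F\cong S_F\mu$ by \eqref{eq:fundamental}, reducing modulo $J$ gives $\Bbar\otimes F'\cong F'[t]\bar\mu$. This is $t$-torsion-free, so $\Bbar\hookrightarrow F'[t]\bar\mu$ and $t$ acts injectively. This completes the regularity claim.

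Next I describe the lattice. Each graded piece $\Bbar^{-2n+2j}$ is a rank-one $\OO$-lattice in $F't^j\bar\mu$; the rank is read off from the exact sequence. In degree $-2n$ it is $\OO\bar\mu$, because $\mu$ is primitive by Lemma~\ref{lem:integral-sequence}. I choose generators $b_j$ of these lattices with $b_0=\bar\mu$. Since $tb_j\in\Bbar^{-2n+2j+2}$, we can write $tb_j=c_jb_{j+1}$ with $c_j\in\OO$. Each $c_j$ is nonzero because $t$ is injective.

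For degrees $\ge 0$, the sequence shows that $\Bbar$ there equals $\OO[t]\bar\varphi(1)$. So $b_{n+m}=t^mb_n$ with $b_n=\bar\varphi(1)$, and $c_j=1$ for $j\ge n$. Reducing $d\varphi(1)=\pi\mu$ modulo $J$ with \eqref{eq:line} gives $d\,b_n=u\,t^n b_0$. On the other hand, $t^nb_0=c_0\cdots c_{n-1}b_n$. Comparing the two expressions in the torsion-free module yields \eqref{eq:product}.

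The quotient $\Bbar/t\Bbar$ is then read off degree by degree:
\begin{itemize}
\item in degree $-2n$ it is $\OO b_0$;
\item in degree $-2n+2j+2$ for $0\le j<n$ it is $\OO b_{j+1}/c_j\OO b_{j+1}$;
\item in degrees beyond that it vanishes, since $c_j=1$ there.
\end{itemize}
This is exactly \eqref{eq:quotient}. Finally, the length of the torsion part is $\sum_j v(c_j)=v(d/u)=v_p(d)$. Here I use that $u$ is a unit and that $\OO/R$ is unramified, so the normalized valuation of $\OO$ restricts to $v_p$. This gives \eqref{eq:length}.

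The main obstacle is the bookkeeping in the first step. One must check carefully that exactness and degreewise freeness survive the quotient by $J$, which rests on the regularity of the $\ell_i$ on $A_{\OO}$ from Lemma~\ref{lem:line}. Once $\Bbar$ is known to be an $\OO$-torsion-free submodule of $F'[t]\bar\mu$, the rest is a direct lattice computation.
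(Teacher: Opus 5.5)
Your proposal is correct and follows essentially the same route as the paper. You carry the short exact sequence to the line via the regularity of $\ell_1,\ldots,\ell_{r-1}$ on the outer terms, then get $t$-regularity by embedding $\Bbar$ in $F'[t]\mu$, and finally run the same lattice computation with $b_0=\mu$, $b_n=\bar\varphi(1)$, $tb_j=c_jb_{j+1}$ and $db_n=ut^nb_0$.
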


\begin{proof}
In a short exact sequence, a sequence regular on the two outer modules is regular on its middle module, and quotienting by the sequence preserves exactness. This follows one element at a time from the snake lemma. Applying it to \eqref{eq:integral-sequence}, after extending coefficients to $\OO$, and to the generators of \(J\), we obtain
\begin{equation}\label{eq:line-sequence}
0\longrightarrow\OO[t]\xrightarrow{\bar\varphi}\Bbar \longrightarrow\Abar[2n]\longrightarrow0.
\end{equation}
The degreewise freeness of the two outer modules makes the middle module \(\Bbar\) degreewise \(\OO\)-free; hence \(\Bbar\) embeds in its rationalization \(F'[t]\mu\). Multiplication by \(t\) is injective there, and hence on \(\Bbar\). This proves the regularity assertion.

By \eqref{eq:line-ranks} and \eqref{eq:line-sequence}, each graded piece \(\Bbar^{-2n+2j}\) is free of rank one for every \(j\geq0\), and all other graded pieces vanish. Choose its generator $b_j$ such that
\[
 b_0=\mu,\qquad b_n=\bar\varphi(1),\qquad b_j=t^{j-n}b_n\quad(j\geq n).
\]
The first choice uses primitivity of the orientation class. The other choices are possible because \(\Abar[2n]\) vanishes in nonnegative degrees and $\bar\varphi$ is an isomorphism in those degrees. Define the transition coefficients \(c_j\in \OO\setminus\{0\}\), for \(0\leq j<n\), by
\begin{equation}\label{eq:transition}
 tb_j=c_jb_{j+1}.
\end{equation}
For \(j\geq n\), the corresponding transition coefficient is one. The specialized fundamental relation~\eqref{eq:fundamental} gives
\[
 db_n=ut^nb_0.
\]
Iterating \eqref{eq:transition} proves \eqref{eq:product}. Taking the quotient by \(t\) degreewise gives \eqref{eq:quotient}. Since \(\OO/R\) is unramified, its normalized valuation still has \(v_p(p)=1\). Taking lengths in \eqref{eq:quotient} and using \eqref{eq:product} proves \eqref{eq:length}.
\end{proof}

\section{Forgetting equivariance and computing ordinary torsion}\label{sec:ordinary}
In this section, we give a proof of Theorem~\ref{thm:affine} and Corollary~\ref{cor:affine-integral}. 

\begin{lemma}\label{lem:koszul}
Let \(M\) be a finite graded module, bounded below, over \(\OO[z_1,\ldots,z_r]\), where the variables have positive degree. If an invertible \(\OO\)-linear change of the variables \(z_1,\ldots,z_r\) is an \(M\)-regular sequence, then \(z_1,\ldots,z_r\) is also an \(M\)-regular sequence.
\end{lemma}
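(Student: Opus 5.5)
The plan is to reduce to the standard fact that, for a finite graded module over a positively graded polynomial ring, being regular is a property of the ideal generated by the sequence, not of the chosen generators. Write \(S'=\OO[z_1,\ldots,z_r]\), and let \(y_1,\ldots,y_r\) be the changed variables, \(y_i=\sum_j g_{ij}z_j\) with \(g\in GL_r(\OO)\). Since the change is invertible over \(\OO\), the ideals \((y_1,\ldots,y_r)\) and \((z_1,\ldots,z_r)\) of \(S'\) coincide; call this ideal \(I\). We may assume the \(z_i\) are homogeneous and that each \(y_i\) is homogeneous. This requires \(g_{ij}=0\) unless \(\deg z_i=\deg z_j\), which is automatic in our application, where all variables have degree two.

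The key tool is the Koszul complex. For a sequence \(\mathbf a=a_1,\ldots,a_r\) of homogeneous elements of positive degree and a finite graded \(M\) bounded below, I will show that \(\mathbf a\) is \(M\)-regular if and only if \(H_1(\mathbf a;M)=0\), and also if and only if \(H_i(\mathbf a;M)=0\) for all \(i\geq1\). One direction is the standard vanishing of higher Koszul homology for a regular sequence.

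The converse is proved by induction on \(r\). From the long exact sequence
\[
\cdots\to H_1(a_1,\ldots,a_{r-1};M)\xrightarrow{\pm a_r}H_1(a_1,\ldots,a_{r-1};M)\to H_1(\mathbf a;M)\to H_0(a_1,\ldots,a_{r-1};M)\xrightarrow{\pm a_r}\cdots,
\]
vanishing of \(H_1(\mathbf a;M)\) shows that \(a_r\) acts surjectively on \(H_1(a_1,\ldots,a_{r-1};M)\). This module is graded, finitely generated and bounded below, so graded Nakayama, using \(\deg a_r>0\), forces it to vanish. By induction \(a_1,\ldots,a_{r-1}\) is then \(M\)-regular, and the same sequence shows that \(a_r\) is injective on \(M/(a_1,\ldots,a_{r-1})M\). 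The inputs here are that \(M\) is finite and bounded below and that the degrees are positive; no locality of \(\OO\) is needed beyond the grading. Note also that \(M/(a_1,\ldots,a_r)M\ne 0\) is automatic whenever \(M\neq0\), again by graded Nakayama, since the \(a_i\) lie in positive degree.

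Next, the Koszul complexes of \(\mathbf y\) and \(\mathbf z\) are isomorphic as complexes of graded \(S'\)-modules. Indeed, \(K(\mathbf y;S')=\bigwedge^\bullet(S'^{\,r})\) with differential contracting against \(\mathbf y\), and the matrix \(g\) induces \(\bigwedge^\bullet g^{T}\). This map intertwines the differentials and is invertible because \(g\) is invertible. Tensoring with \(M\) gives \(H_\bullet(\mathbf y;M)\cong H_\bullet(\mathbf z;M)\).

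Combining the two steps, the hypothesis that \(\mathbf y\) is \(M\)-regular gives \(H_1(\mathbf z;M)=0\), so \(\mathbf z\) is \(M\)-regular. The main point to check carefully is the Nakayama step in the graded, non-local setting. In particular, the homogeneity of the \(y_i\) has to be secured so that the Koszul homologies are graded and bounded below. If the degrees of the variables differ, I would restrict to changes preserving degrees, which is all that the later application needs.
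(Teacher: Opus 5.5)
Your proof is correct and follows essentially the same route as the paper. Both identify the Koszul complexes under the invertible change of variables, then recover regularity of \(z_1,\ldots,z_r\) by induction, using the mapping-cone long exact sequence and the fact that a positive-degree map cannot be surjective on a nonzero bounded-below graded module. Relying only on vanishing of \(H_1\) is slightly more economical than the paper's use of all \(H_i\), \(i>0\).

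Your caveat about homogeneity of the \(y_i\) is unnecessary. The implication ``regular \(\Rightarrow\) Koszul acyclic'' needs no grading, and the graded Nakayama step is only ever applied to the homogeneous \(z_i\).
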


\begin{proof}
An invertible change of generators identifies the Koszul complexes, including their exterior generators and differentials. Thus the Koszul complex on \(z_1,\ldots,z_r\) is acyclic in positive homological degrees. To recover regularity, view the full complex as the mapping cone of multiplication by \(z_r\) on the complex \(K'\) for the first \(r-1\) variables. The long exact sequence makes multiplication by \(z_r\) surjective on each \(H_i(K')\) for \(i>0\). A positive-degree map cannot be surjective on a nonzero bounded-below graded module. Hence \(H_i(K')=0\) for \(i>0\). Induction gives regularity of the first \(r-1\) variables, and vanishing of the full first homology gives injectivity of $z_r$ on $H_0(K') = M/(z_1,...,z_{r-1})M$.
\end{proof}

Now we compute $H^\bullet(X;\omega_{X,\OO})$ in the following two lemmas.
\begin{lemma}\label{lem:forget}
With the notation of Section~\ref{sec:specialization},
\begin{equation}\label{eq:forget}
 H^\bullet(X;\omega_{X,\OO}) \cong B_{\OO}/S_{\OO}^{+}B_{\OO} \cong\Bbar/t\Bbar.
\end{equation}
\end{lemma}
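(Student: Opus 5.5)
The first isomorphism in \eqref{eq:forget} comes from an Eilenberg--Moore / Koszul comparison, applied to the free generators of \(S_{\OO}\) and enabled by a regular sequence. The second isomorphism is then purely algebraic: \(S_{\OO}^{+}\) is generated by the degree-two linear forms \(\ell_1,\ldots,\ell_{r-1},t\), so
\[
B_{\OO}/S_{\OO}^{+}B_{\OO}=B_{\OO}/(J,t)B_{\OO}=\Bbar/t\Bbar .
\]
This holds immediately, since \(J=(\ell_1,\ldots,\ell_{r-1})\) and these forms together with \(t\) span the degree-two part.

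For the first isomorphism, I will choose a \(\Z\)-basis \(\varepsilon_1,\ldots,\varepsilon_r\) of \(\Lambda\). The corresponding degree-two generators \(z_1,\ldots,z_r\) are the Chern classes of the tautological line bundles, and they realize \(S_{\OO}=\OO[z_1,\ldots,z_r]\) as \(H^\bullet(BT;\OO)\). The key input is that \(z_1,\ldots,z_r\) is a \(B_{\OO}\)-regular sequence. Lemma~\ref{lem:lattices} shows that the basis \(\ell_1,\ldots,\ell_{r-1},t\), which is an invertible \(\OO\)-linear change of the \(z_i\), is \(B_{\OO}\)-regular. By Lemma~\ref{lem:denominator} and the preceding finiteness remarks, \(B_{\OO}\) is finite graded and bounded below, since it lives in degrees \(\geq -2n\). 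Lemma~\ref{lem:koszul} therefore transfers regularity to \(z_1,\ldots,z_r\).

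Next I will forget equivariance one circle factor at a time. Write \(T=T'\times\C^\times\), splitting off the factor dual to \(z_r\), or more symmetrically induct on the rank. For the rank-one step, the Gysin sequence of the circle bundle \(ET\times_{T} X\to ET\times_{T'}X\) (equivalently, the triangle \(\omega\xrightarrow{z}\omega[2]\to\cdots\) in \(D_{T'}\) obtained by restricting from \(T\) to \(T'\)) gives a long exact sequence
\[
\cdots\to H_T^{j-2}(X;\omega_{X,\OO})\xrightarrow{z_r}H_T^{j}(X;\omega_{X,\OO})\to H_{T'}^{j}(X;\omega_{X,\OO})\to H_T^{j-1}(X;\omega_{X,\OO})\to\cdots.
\]
Injectivity of \(z_r\) gives
\[
H_{T'}^\bullet(X;\omega_{X,\OO})\cong B_{\OO}/z_rB_{\OO}
\]
as modules over \(\OO[z_1,\ldots,z_{r-1}]\). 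The remaining variables are regular on this quotient, so iterating down to the trivial group yields \(H^\bullet(X;\omega_{X,\OO})\cong B_{\OO}/(z_1,\ldots,z_r)B_{\OO}=B_{\OO}/S_{\OO}^{+}B_{\OO}\).

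The main obstacle is justifying this Gysin triangle for the dualizing complex in the equivariant derived category with coefficients \(\OO\). One needs that restriction from \(T\) to \(T'=\ker\varepsilon_r^\vee\) sits in a triangle whose connecting map is cup product with \(z_r\), compatibly with the \(S_{\OO}\)-module structures. I would deduce this from the \(S^1\)-fibration \(BT'\to BT\) and base change (via Bernstein--Lunts, applying \(R\pi_*\) and using \(H^\bullet_{\C^\times}(\pt)=\OO[z]\)). A second point is that \(\OO\) is not \(\Zp\). All constructions commute with the flat extension \(\Zp\to\OO\), so I will work with \(R\) and tensor at the end; the regularity itself is only available over \(\OO\), and that suffices.
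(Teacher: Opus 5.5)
Your proposal is correct and follows essentially the paper's route. You transfer regularity from the basis $\ell_1,\ldots,\ell_{r-1},t$ of Lemma~\ref{lem:lattices} to an integral character basis via Lemma~\ref{lem:koszul}, peel off one circle at a time with Gysin sequences as in \cite[Proposition~3.2]{JW}, and read off the second isomorphism from $S_{\OO}^{+}=(\ell_1,\ldots,\ell_{r-1},t)$. One small slip: the circle bundle is $ET\times_{T'}X\to ET\times_{T}X$ with $T'=\ker\varepsilon_r$ (the kernel of the character, not of $\varepsilon_r^\vee$), and this is the direction your long exact sequence actually uses.
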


\begin{proof}
Take a basis \(z_1,\ldots,z_r\) of the integral character lattice. Lemmas~\ref{lem:lattices} and \ref{lem:koszul} show that it is a regular sequence on \(B_{\OO}\). Then \cite[Proposition~3.2]{JW} applies; it works over $\OO$ with the same proof. Explicitly, the character basis determines a tower of principal circle bundles from the Borel construction to the nonequivariant space. In each Gysin sequence, the Euler-class map is multiplication by the corresponding character. Its injectivity identifies cohomology at the next stage with the cokernel. Apply this successively using regularity. Gysin sequences with the pulled-back dualizing complex are obtained in the same way from the circle-bundle projection formula.  Applying it to the equivariant dualizing complex gives \eqref{eq:forget}.
\end{proof}

\begin{lemma}\label{lem:endpoints}
There are isomorphisms
\begin{equation*}
    \begin{aligned}
 H^j(X;\omega_{X,\OO})&\cong H^{j+2n}(U;\OO), \quad j<-1,\\
H^0(X;\omega_{X,\OO})&=0,\\
H^{2n-1}(U;\OO)&\cong\OO.
\end{aligned}
\end{equation*}
In particular, the coefficient \(c_{n-1}\) of Lemma~\ref{lem:lattices} is a unit.
\end{lemma}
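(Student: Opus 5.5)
We work with the nonequivariant closed-open triangle obtained from \eqref{eq:dualizing-triangle} by forgetting equivariance and extending scalars to \(\OO\):
\[
 i_*\OO_{\{x\}}\longrightarrow\omega_{X,\OO}\longrightarrow Rj_*\OO_U[2n]\longrightarrow.
\]
This is legitimate because \(U\) is \(p\)-smooth, so the orientation isomorphism \(\omega_{U,\OO}\cong\OO_U[2n]\) holds over the unramified extension \(\OO\) (Section~\ref{sec:preliminaries}). Taking hypercohomology gives a long exact sequence
\[
 \cdots\to H^j(\pt;\OO)\to H^j(X;\omega_{X,\OO})\to H^{j+2n}(U;\OO)\to H^{j+1}(\pt;\OO)\to\cdots.
\]
The first term is \(\OO\) in degree \(0\) and vanishes otherwise. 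So for \(j<-1\) both neighbouring point terms vanish, and the middle map is an isomorphism. This gives the first claim.

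For the second claim, note that \(H^0(X;\omega_{X,\OO})=H_0^{\BM}(X;\OO)\). This group vanishes by Lemma~\ref{lem:link}: the one-point compactification of \(X\) is a suspension, and that argument works over any coefficient ring.

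With \(H^0(X;\omega_{X,\OO})=0\), the sequence around degree \(j=-1\) reads
\[
 0=H^{-1}(\pt)\to H^{-1}(X;\omega_{X,\OO})\to H^{2n-1}(U;\OO)\to H^0(\pt;\OO)=\OO\to H^0(X;\omega_{X,\OO})=0.
\]
Hence there is a short exact sequence \(0\to H^{-1}(X;\omega)\to H^{2n-1}(U)\to\OO\to0\). To identify \(H^{-1}(X;\omega_{X,\OO})\), we use Lemma~\ref{lem:forget}: it equals \((\Bbar/t\Bbar)^{-1}\). The decomposition \eqref{eq:quotient} lives only in even degrees, so this group is zero. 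Therefore \(H^{2n-1}(U;\OO)\cong\OO\). Alternatively, one can argue via the cone structure: \(U\simeq L\) is a compact orientable \(\OO\)-homology manifold of dimension \(2n-1\). It is connected, since \(X\) is irreducible of positive dimension and so \(U\) is connected. Its top cohomology is therefore \(\OO\), which serves as a consistency check.

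For the unit statement, we evaluate \eqref{eq:quotient} in degree \(0=-2n+2n\). The summand there is \(\OO/(c_{n-1})b_n\), which is \((\Bbar/t\Bbar)^0\cong H^0(X;\omega_{X,\OO})=0\) by Lemma~\ref{lem:forget}. Hence \(\OO/(c_{n-1})=0\), that is, \(c_{n-1}\in\OO^\times\).

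The only delicate point is making sure the parity and vanishing statements transfer correctly through Lemma~\ref{lem:forget} in degrees \(-1\) and \(0\). Concretely, we must check that the degree indexing of \(b_j\) places \(b_n\) in degree \(0\). It does, since \(\deg b_n=\deg\bar\varphi(1)=0\).
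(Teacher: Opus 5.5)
Your proposal is correct and follows the paper's proof essentially step for step. You use the closed-open long exact sequence for \(j<-1\), Lemma~\ref{lem:link} for \(H_0^{\BM}(X;\OO)=0\), Lemma~\ref{lem:forget} with the even-degree decomposition \eqref{eq:quotient} to kill \(H^{-1}(X;\omega_{X,\OO})\), and the degree-zero summand \(\OO/(c_{n-1})b_n\) for the unit claim; your cone/homology-manifold remark is a valid but unneeded consistency check.
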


\begin{proof}
Apply ordinary cohomology to the closed-open triangle \eqref{eq:dualizing-triangle}, with coefficients in \(\OO\). Since the point has cohomology only in degree zero, the long exact sequence gives the first isomorphism. Lemma~\ref{lem:link} gives
\[
 H^0(X;\omega_{X,\OO})=H_0^{\BM}(X;\OO)=0.
\]
By \eqref{eq:forget} and \eqref{eq:quotient}, \(H^{-1}(X;\omega_{X,\OO})=0\) as well. The relevant part of the long exact sequence is therefore
\[
0\longrightarrow H^{2n-1}(U;\OO)\longrightarrow\OO \longrightarrow0.
\]
Finally, the only degree-zero summand of the right hand side of \eqref{eq:quotient} is \(\OO/(c_{n-1})b_n\). By \eqref{eq:forget}, its vanishing proves that \(c_{n-1}\) is a unit.
\end{proof}

We are ready to give a proof of Theorem~\ref{thm:affine} and Corollary~\ref{cor:affine-integral}.

\begin{proof}[Proof of Theorem~\ref{thm:affine}]
Lemmas~\ref{lem:lattices}, \ref{lem:forget} and \ref{lem:endpoints} identify all torsion terms and give
\[
\length_{\OO}\tors_{\OO}H^\bullet(U;\OO)=v_p(d).
\]
For a finite torsion \(R\)-module \(M\), extension to \(\OO\) preserves length: each summand \(R/(p^a)\)  in its invariant factor decomposition becomes \(\OO/(p^a)\), of length \(a\). The residue-field degree of $\OO$ changes cardinality over $\OO$ but does \emph{not} change this length. By Lemma \ref{lem:link}, finite CW type gives flat base change
\[
H^\bullet(U;\OO)=H^\bullet(U;R)\otimes_R\OO.
\]
Hence we obtain \eqref{eq:affine-order}.

The torsion summand \(\OO/(c_k)b_{k+1}\) in \eqref{eq:quotient} has degree \(2k+2\) in $H^\bullet(U;\OO)$. The last such summand is zero by Lemma~\ref{lem:endpoints}. Thus every intermediate odd group vanishes, and there is at most one cyclic summand in each intermediate even degree. These assertions descend from $\OO$ to \(R\) by faithful flatness and the invariant-factor classification over a discrete valuation ring. Put \(a_i=v_p(c_{i-1})\), for \(1\leq i<n\). The product identity and the unit \(c_{n-1}\) give \(\sum_i a_i=v_p(d)\). The assertion for degree $2n-1$ and the vanishing in higher degrees follow from Lemmas~\ref{lem:link} and \ref{lem:endpoints}. This proves \eqref{eq:degreewise}.
\end{proof}

\begin{proof}[Proof of Corollary~\ref{cor:affine-integral}]
The integral hypotheses imply the hypotheses of Theorem~\ref{thm:affine} at every prime. Ordinary integral cohomology is finitely generated by Lemma~\ref{lem:link}, and flat change to \(\Zp\) identifies its \(p\)-primary torsion with the torsion of \(H^\bullet(U;\Zp)\). Multiplying the orders over all primes proves \eqref{eq:integral-order}. Each intermediate even group has cyclic $p$-primary part at every prime $p$, so it is cyclic. The intermediate odd groups vanish after tensoring with every \(\Zp\), and hence vanish.
\end{proof}

\begin{remark}
The numerator $d$ determines the order of the total torsion, but the proof does not assert that it determines each cohomology group. The individual valuations of the transition coefficients in \eqref{eq:transition} contain the finer degreewise information.
\end{remark}

We record the following duality result when the punctured \(U\) is smooth.

\begin{corollary}\label{cor:square}
Under the hypotheses of Corollary~\ref{cor:affine-integral}, assume that \(U\) is smooth. For \(1\leq i<n\), set \(m_i=|H^{2i}(U;\Z)|\), taking the order of the zero group to be one. Then
\[
 m_i=m_{n-i},\qquad |d|=\prod_{i=1}^{n-1}m_i.
\]
In particular, \(|d|\) is a square if \(n\) is odd. The same conclusion holds for \(f_{v,w}\) when the punctured Schubert slice $U$ in Theorem~\ref{thm:schubert} is smooth.
\end{corollary}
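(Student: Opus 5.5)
Since \(U\) is smooth and, by Lemma~\ref{lem:link}, has the homotopy type of a finite complex, the plan is to use Poincaré--Lefschetz duality on a compact model of \(U\) together with the universal coefficient theorem. By Lemma~\ref{lem:link}, \(U\cong L\times\R_{>0}\), and \(U\) smooth makes the link \(L=X\cap h^{-1}(1)\) a smooth submanifold: \(h\) restricted to \(U\) has no critical points, because the derivative of \(t\mapsto h(\lambda(t)z)\) is strictly positive. So \(L\) is a closed oriented manifold of real dimension \(2n-1\), orientable because \(U\) carries a complex structure. Also \(H^\bullet(U;\Z)\cong H^\bullet(L;\Z)\), and Corollary~\ref{cor:affine-integral} gives the full integral cohomology of \(L\): \(H^0=H^{2n-1}=\Z\), \(H^{2i}\) finite cyclic of order \(m_i\) for \(1\le i<n\), and all other groups zero.

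The key step is the torsion linking form on \(L\), or equivalently Poincaré duality combined with universal coefficients. Poincaré duality gives \(H^{q}(L;\Z)\cong H_{2n-1-q}(L;\Z)\). By universal coefficients, \(\tors H^{q+1}(L;\Z)\cong\tors H_{q}(L;\Z)\). Take \(q=2i\) with \(1\le i<n\). Then
\[
\tors H^{2i}(L)\cong \tors H_{2n-1-2i}(L)\cong \tors H^{2n-2i}(L).
\]
Since all the intermediate even groups are finite, this yields \(H^{2i}(U;\Z)\cong H^{2(n-i)}(U;\Z)\), so \(m_i=m_{n-i}\). The indices match correctly: as \(i\) ranges over \(1,\dots,n-1\), so does \(n-i\). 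Alternatively, one could invoke the nondegenerate linking pairing \(\tors H^{2i}\times\tors H^{2n-2i}\to\Q/\Z\), which gives the same conclusion.

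The product formula is exactly \eqref{eq:integral-order} combined with the degreewise description in Corollary~\ref{cor:affine-integral}: the torsion is concentrated in the intermediate even degrees, so \(|d|=\prod_{i=1}^{n-1} m_i\).

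For the parity statement, let \(n\) be odd. The involution \(i\mapsto n-i\) on \(\{1,\dots,n-1\}\) has no fixed point, since \(n/2\) is not an integer. The factors therefore pair up as \(|d|=\prod_{i<n/2} m_i^2\), which is a square. For the Schubert case, Theorem~\ref{thm:schubert} identifies \(|\tors H^\bullet(U;\Z)|\) with \(f_{v,w}\), and its proof goes through Corollary~\ref{cor:affine-integral}, so the same degreewise description holds. A smooth punctured slice is in particular \(\Z\)-smooth, so the argument above applies verbatim.

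The only point needing care is that \(L\) is a genuine closed oriented manifold, so that Poincaré duality holds. I expect this to be routine via the transversality of \(h\) described above. If it failed, one could instead use Poincaré duality for the \(\Z\)-homology manifold \(L\).
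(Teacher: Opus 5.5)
Your proposal is correct and follows the paper's proof: the paper also observes that the link $L$ is a closed oriented smooth $(2n-1)$-manifold and uses its nonsingular torsion linking pairing, which is equivalent to your Poincar\'e duality plus universal coefficients computation, to get $m_i=m_{n-i}$, with the product formula coming from Corollary~\ref{cor:affine-integral}. Your regular-value argument for $h|_U$ (the derivative along $\lambda$-orbits is positive) spells out why $L$ is a smooth compact hypersurface, a point the paper states without proof.
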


\begin{proof}
The link $L$ in Lemma \ref{lem:link} is then a closed oriented smooth manifold of dimension \(2n-1\). Its nonsingular torsion linking pairing pairs the torsion in \(H^j\) with that in \(H^{2n-j}\). Thus \(m_i=m_{n-i}\). Corollary~\ref{cor:affine-integral} gives the product formula.
\end{proof}

\section{Applications to Schubert varieties}\label{sec:schubert}

We explain the geometric hypotheses in Theorem~\ref{thm:schubert} and record an alternative parity sheaf argument. The latter can replace Section~\ref{sec:geometry} when one is interested only in Schubert slices.

\subsection{Equivariant torsion-freeness and the Schubert torsion formula}
Retain the notation of the introduction. The standard normal slice $N$ is affine and \(T\)-stable, with an attractive point \(x_v\), and there is a product neighborhood (see~\cite[Section 7]{JW})
\begin{equation}\label{eq:slice-product}
 C_v\times N\xrightarrow{\sim}\widetilde N\subset X_w.
\end{equation}
This neighborhood is compatible with the Bruhat stratification. The word normal in ``normal slice'' refers to transversality. We will use the following result due to Fiebig and Williamson (see~\cite[Corollary 8.10]{FW}); see also \cite[Proposition 7.1]{JW}. Its proof uses parity sheaves~\cite{JMW}.

\begin{proposition}\label{prop:schubert-free}
If \(N\setminus\{x_v\}\) is \(p\)-smooth, then \(H_T^\bullet(N\setminus\{x_v\};\Z)\) has no \(p\)-torsion.
\end{proposition}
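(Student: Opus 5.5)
The plan is to deduce the statement from the theory of parity sheaves \cite{JMW}, working with coefficients \(\Zp\) and \(k=\Fp\). Since \(H_T^\bullet(U;\Z)\) is finitely generated in each degree, flat base change to \(\Zp\) reduces the claim to showing that \(H_T^\bullet(U;\Zp)\) is \(\Zp\)-torsion-free, where \(U=N\setminus\{x_v\}\).

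\textbf{Step 1: replace the constant sheaf by a parity sheaf.} Let \(\mathcal E=\mathcal E(w;\Zp)\) be the indecomposable \(B\)-equivariant parity sheaf on \(X_w\) supported on \(X_w\).
\begin{enumerate}
\item By the product neighborhood \eqref{eq:slice-product}, compatible with the Bruhat stratification, \(p\)-smoothness of \(U\) makes the open piece \(\widetilde N\setminus C_v\) \(p\)-smooth.
\item On a \(p\)-smooth stratified open set, the shifted constant sheaf \(\Zp[\dim X_w]\) has free even stalks and costalks, so it is parity there. Being indecomposable with the correct support, it must be the restriction of \(\mathcal E\).
\item Restricting along the transversal slice, \(\mathcal E|_N\) is (up to shift) a parity complex on \(N\) whose restriction to \(U\) is \(\Zp{}_U\).
\end{enumerate}
In particular, \(H_T^\bullet(U;\Zp)=H_T^\bullet(U;\mathcal E|_U)\) up to shift.

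\textbf{Step 2: the closed--open sequence at \(x_v\).} Write \(i:\{x_v\}\hookrightarrow N\) and use the triangle \(i_*i^!\mathcal E\to\mathcal E\to Rj_*j^*\mathcal E\to\).
\begin{enumerate}
\item Attractiveness gives \(H_T^\bullet(N;\mathcal E)=H_T^\bullet(i^*\mathcal E)\).
\item By the parity property, \(H_T^\bullet(i^*\mathcal E)\) and \(H_T^\bullet(i^!\mathcal E)\) are free \(S_{\Zp}\)-modules concentrated in even degrees (after normalizing the shift).
\item The long exact sequence then gives \(H_T^{\mathrm{even}}(U)\cong\operatorname{coker}\bigl(H_T^\bullet(i^!\mathcal E)\xrightarrow{\psi}H_T^\bullet(i^*\mathcal E)\bigr)\) and \(H_T^{\mathrm{odd}}(U)\cong\ker\psi\), in each degree.
\end{enumerate}
So it suffices to prove that \(\psi\) is injective and that its cokernel is \(\Zp\)-torsion-free. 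Since both sides are degreewise free over \(\Zp\), this is equivalent to injectivity of \(\psi\otimes\Fp\). Because parity complexes are compatible with modular reduction, \(\psi\otimes\Fp\) is exactly the corresponding map for \(\mathcal E\otimes^L\Fp\).

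\textbf{Step 3: injectivity via localization.} Injectivity over \(\Qp\) should follow from the localization theorem: \(x_v\) is the unique fixed point, and \(U\) has no fixed points, so \(\psi\) becomes an isomorphism after inverting all nonzero characters. For \(\psi\otimes\Fp\), I would use the same argument with \(S_{\Fp}\) in place of \(S_{\Zp}\). A free source plus generic bijectivity then gives injectivity, provided \(H_T^\bullet(U;\Fp)\) is \(S_{\Fp}\)-torsion. That in turn holds if the \(T\)-weights on the slice (roots, in the root-lattice normalization chosen in the introduction) are nonzero modulo \(p\), so that \(U\) has no \(K[p]\)-fixed points; this is the mechanism of Lemma~\ref{lem:torsion-pullback}.

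\textbf{Main obstacle.} I expect Step 3 to be the hardest part, because by Remark~\ref{rem:finite-isotropy} the absence of \(T\)-fixed points does not by itself give torsion over \(\Fp\). There are two possible routes.
\begin{enumerate}
\item First, I would try to show directly that \(K[p]\) acts on \(U\) without fixed points, by checking that the slice coordinates have weights that are roots, and that a root is never in \(p\Z\Phi\). This would use the adjoint normalization \(\Lambda=\Z\Phi\).
\item If that fails for small \(p\), I would instead invoke the Fiebig--Williamson moment-graph description of stalks of parity sheaves. That description identifies \(H_T^\bullet(i^!\mathcal E)\to H_T^\bullet(i^*\mathcal E)\) with the inclusion of the costalk into the stalk of a Braden--MacPherson sheaf, which is injective with \(\Zp\)-free cokernel by construction.
\end{enumerate}
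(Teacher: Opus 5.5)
Your argument is correct, and it takes a genuinely different route from the paper. The paper does not prove this proposition: it imports it from Fiebig--Williamson \cite[Corollary~8.10]{FW} (cf.\ \cite[Proposition~7.1]{JW}), whose proof goes through their moment-graph description of parity sheaves.

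\textbf{What you do instead.}
\begin{enumerate}
\item You build the same two-term presentation $0\to H_T^\bullet(i^!\mathcal E)\xrightarrow{\psi}H_T^\bullet(i^*\mathcal E)\to A\to 0$ that the paper later constructs in Proposition~\ref{prop:parity-presentation}.
\item The paper only shows that $\psi$ is injective after rationalization. You upgrade this to injectivity of $\psi\otimes\Fp$. The kernel of $\psi\otimes\Fp$ is the image of the $S_{\Fp}$-linear connecting map out of $H_T^\bullet(U;\Fp)$, so it is torsion. It also sits inside a free module, so it vanishes.
\item Degreewise this makes $\psi$ split injective over $\Zp$. Hence $H_T^{\mathrm{odd}}(U;\Zp)=0$ and the even part is $\Zp$-free.
\end{enumerate}

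\textbf{Step~3 closes.} The point you left open does go through, so the fallback via Braden--MacPherson sheaves is not needed.
\begin{enumerate}
\item The ambient affine space of the slice is a product of root subgroups, so all coordinate weights are roots.
\item Every root is $W$-conjugate to a simple root. Hence every root is primitive in $\Z\Phi$ and never lies in $p\Z\Phi$.
\item The covering argument in the second half of the proof of Lemma~\ref{lem:torsion-pullback} then shows that the product of these roots annihilates $H_T^\bullet(U;\Fp)$. This product is nonzero in $S_{\Fp}$.
\end{enumerate}
This is exactly where the adjoint normalization enters. For $SL_2$ with its simply connected torus, $U=\C^\times$ has weight $2$ and $H_T^\bullet(U;\Z)=H^\bullet(B\mu_2;\Z)$, which has $2$-torsion.

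\textbf{Small repair in Step~1.} The set $\widetilde N\setminus C_v$ is not a union of Bruhat cells, so the restriction and uniqueness results of \cite{JMW} should be applied on the $B$-stable open set $Z=\bigcup_{v<y\le w}C_y$. Its $p$-smoothness follows by $B$-translation, since every such $C_y$ meets $N$. This is as in the proof of Proposition~\ref{prop:parity-presentation}.

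\textbf{Comparison.} Your route gives a self-contained proof that uses only existence and uniqueness of parity sheaves together with the paper's own Lemma~\ref{lem:torsion-pullback}. It also makes visible that the only input about $p$ is the nonvanishing of the slice weights modulo $p$. A side benefit is that Proposition~\ref{prop:parity-presentation} would no longer need to cite this result. The paper's citation buys brevity and sits inside a framework that also computes stalks combinatorially.
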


At a rationally smooth point, Kumar's criterion gives
\[
 e_{x_v}N=\frac{f_N}{\pi_N},
\]
where \(\pi_N\) is a product of \(n\) primitive root characters and \(f_N\) is a positive integer, with consistent signs for the weights; see \cite{Kumar,JW}. Multiplicativity in \eqref{eq:slice-product} gives
\begin{equation}\label{eq:numerator}
 e_{x_v}X_w=\frac{e_{x_v}N}{\det(T_{x_v}C_v)}=\frac{f_N}{\pi_N\det(T_{x_v}C_v)}.
\end{equation}
Every factor in this denominator is primitive in \(\Z\Phi\). By Gauss's lemma the denominator has content one. A constant numerator cannot cancel a nonconstant factor, nor can it cancel an
integer factor. Consequently \(f_N=f_{v,w}\).

\begin{proof}[Proof of Theorem~\ref{thm:schubert}]
If $v = w$, then $U$ is empty and $f_{w,w}= 1$, so both torsion-order formulas are immediate. For a positive-dimensional normal slice $N$, Proposition~\ref{prop:schubert-free} supplies the coefficient hypothesis of Theorem~\ref{thm:affine}. Equation~\eqref{eq:numerator} identifies its numerator with \(f_{v,w}\). Hence the local assertion follows. If \(U\) is $\Z$-smooth, one can apply the local assertion at every prime and use finite generation, as in the proof of Corollary~\ref{cor:affine-integral}.
\end{proof}

\vspace{1pt}

\subsection{Cohen--Macaulayness via parity sheaves}
For completeness, we give a direct proof of the regularity needed in Section~\ref{sec:specialization} using parity sheaves. Put \(R=\Zp\) and \(S=S_R\). Let \(\cP_w\) be the indecomposable $T$-equivariant \emph{parity sheaf} (with respect to the natural \emph{constant pariversity}) of the Schubert variety \(X_w\), normalized such that its restriction to the Schubert cell \(C_w\) is the $T$-equivariant constant sheaf in degree zero. Existence of \(\cP_w\) and the stalk and costalk properties follow from the theory of parity sheaves on Schubert varieties \cite{JMW,FW}.

\begin{proposition}\label{prop:parity-presentation}
Suppose that \(U:=N\setminus\{x_v\}\) is \(p\)-smooth. Then \(A=H_T^\bullet(U;R)\) admits an exact sequence
\begin{equation}\label{eq:parity-presentation}
 0\longrightarrow F_1\longrightarrow F_0 \longrightarrow A\longrightarrow0
\end{equation}
with \(F_0,F_1\) finite graded free over \(S\). If $\dim_{\C}N>0$ and \(N\) is also rationally smooth, then \(A\) is Cohen--Macaulay of dimension \(r\), the rank of the maximal torus $T$.
\end{proposition}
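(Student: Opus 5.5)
The plan is to obtain the presentation \eqref{eq:parity-presentation} by working directly on \(X_w\) near \(x_v\) with the parity sheaf \(\cP_w\), in parallel with the resolution argument of Lemma~\ref{lem:resolution}, which produced \eqref{eq:resolution-presentation}.

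\textbf{Reduction to the slice.} By the product neighbourhood \eqref{eq:slice-product}, the restriction of \(\cP_w\) to \(\widetilde N\cong C_v\times N\) is the pullback of a parity complex \(\cP\) on \(N\). Since \(C_v\) is contractible and equivariantly an affine space, equivariant (co)stalks and equivariant cohomology over \(N\) and over \(\widetilde N\) agree, up to a shift and the factor \(\det(T_{x_v}C_v)\) in the Euler classes. This shift does not affect freeness or projective dimension. So it suffices to work on \(N\), with \(i:\{x_v\}\hookrightarrow N\) and \(j:U\hookrightarrow N\).

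\textbf{Identifying \(\cP\) on \(U\).} Because \(U\) is \(p\)-smooth, \(R_U\) is self-dual up to shift on \(U\). It is also even with respect to the constant pariversity, since on each stratum its stalks and costalks are free in even degrees. By uniqueness of indecomposable parity sheaves, \(j^*\cP\cong R_U\) up to the normalizing shift. This step uses that the strata of \(U\) are the slices of the Bruhat cells \(C_y\), \(v<y\le w\), and that \(p\)-smoothness identifies the costalks of \(R_U\) with shifted stalks.

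\textbf{The triangle.} Apply the closed-open triangle \(i_*i^!\cP\to\cP\to Rj_*j^*\cP\to\) and take equivariant cohomology. By the parity theory of \cite{JMW,FW}, \(H_T^\bullet(N;\cP)\cong H_T^\bullet(i^*\cP)\) by contraction, and this is free over \(S\) and even. Likewise \(H_T^\bullet(i^!\cP)\) is free over \(S\) and even. Hence the long exact sequence reads
\[
\cdots\to F_1\to F_0\to A\to F_1[1]\to\cdots,
\]
with \(F_1=H_T^\bullet(i^!\cP)\) and \(F_0=H_T^\bullet(N;\cP)\), both free and concentrated in even degrees.

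To split this sequence, I would argue as follows. After inverting the characters, localization makes \(i^!\to i^*\) an isomorphism, so \(F_1\to F_0\) becomes an isomorphism there. Hence \(A\) is \(S\)-torsion, and the connecting map \(A\to F_1[1]\), from a torsion module to a free one, vanishes. The map \(F_1\to F_0\) is injective because the two modules are free of equal rank and the map is injective after localization. This yields \eqref{eq:parity-presentation}, and so \(\pd_S A\le1\).

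\textbf{Cohen--Macaulayness.} Assume now \(\dim_{\C}N>0\) and \(N\) rationally smooth. By Lemma~\ref{lem:integral-sequence}, \(A\) is \(R\)-torsion-free, and \(\pi A=0\) with \(\pi\neq0\). Then:
\begin{itemize}
\item By \eqref{eq:parity-presentation}, \(\pd A\le1\).
\item \(A\) is not free: it is nonzero, since \(A^0=R\), and it is torsion.
\item So \(\pd A=1\), and Auslander--Buchsbaum over the graded local ring \(S_\m\) of dimension \(r+1\) gives \(\depth A=r\).
\item The annihilator \(\pi\) forces \(\dim A\le r\).
\item Since depth is at most dimension, \(\dim A=\depth A=r\), and \(A\) is Cohen--Macaulay of dimension \(r\).
\end{itemize}

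\textbf{Main obstacle.} I expect the hardest step to be the identification \(j^*\cP\cong R_U\) together with the freeness of the costalk cohomology at \(x_v\). This needs the precise form of the parity-sheaf results in \cite{JMW,FW} for slices, and in particular that \(p\)-smoothness of \(U\) forces the restriction of \(\cP_w\) to \(U\) to be constant.
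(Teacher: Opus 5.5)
Your plan has a genuine gap at the step you yourself flag: the identification $j^*\cP\cong R_U$. You justify it by uniqueness of indecomposable parity sheaves on $U$, stratified by the slices $N\cap C_y$. The restriction and uniqueness theorems of \cite{JMW} need parity vanishing on the strata. Here that means $H_T^m(N\cap C_y;R)=0$ for odd $m$. This vanishing is what makes $\mathrm{Hom}$ between parity complexes surject onto $\mathrm{Hom}$ of their restrictions to an open union of strata, and that surjectivity is how you would get both indecomposability of $j^*\cP$ and the comparison with $R_U$.

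The slice strata are open Richardson varieties, and they violate this. Already in type $A_2$ with $v=e$, $w=w_0$, we have $N\cong\C^3$, so $U$ is smooth and your hypothesis holds. Yet the open stratum $\{c\neq0,\ c\neq ab\}$ carries the $T$-invariant map $c/(c-ab)$ to $\C^\times$, and so $H^1_T\neq0$. Passing to $\widetilde N\setminus C_v\cong C_v\times U$ does not help. It is open in $X_w$ but not a union of Bruhat cells, and its induced strata have the same defect. So this is not a matter of finding the right form of the parity results for slices: the argument has to leave the slice.

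The missing idea is to propagate $p$-smoothness into $X_w$. Put $Z:=\bigcup_{v<y\le w}C_y$, which is open in $X_w$. Then $N\cap Z=U$, for three reasons:
\begin{itemize}
\item every $C_y$ with $v\le y\le w$ meets $N$, since $x_v\in\overline{C_y}$ and the $C_v$-coordinate in \eqref{eq:slice-product} can be removed by $B$;
\item contraction forces $v\le y$ on $N\cap C_y$;
\item $N\cap C_v=\{x_v\}$.
\end{itemize}
The product neighbourhood transfers $p$-smoothness from a point of $U\cap C_y$ to $X_w$, and $B$-transitivity on $C_y$ then makes all of $Z$ $p$-smooth. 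The Bruhat stratification of $Z$ by affine spaces does satisfy the hypotheses of \cite{JMW}. On it, $R_Z$ is an indecomposable parity sheaf extending $R_{C_w}$, with the costalk condition coming from duality. Restriction and uniqueness then give $\cP_w|_Z\cong R_Z$, and restricting to $N$ gives $\cP|_U\cong R_U$.

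With this in place, the rest of your plan is sound and essentially the paper's:
\begin{itemize}
\item $F_0$ and $F_1$ are free and even by parity;
\item localization gives injectivity of $F_1\to F_0$, hence the short exact sequence;
\item Auslander--Buchsbaum, where the paper uses the depth lemma, together with $\dim A_\m\le r$, gives Cohen--Macaulayness.
\end{itemize}
In that last step, the coefficient hypothesis of Lemma~\ref{lem:integral-sequence} has to be supplied by Proposition~\ref{prop:schubert-free}; it is not a conclusion of the lemma. Alternatively, any nonzero annihilator of the finitely generated torsion module $A$ already bounds the dimension.
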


\begin{proof}
If $v = w$, then $A = 0$ and \eqref{eq:parity-presentation} is immediate. We henceforth assume $v < w$.

The normal slice $N$ meets every cell \(C_y\) with \(v\leq y\leq w\). Indeed, \(x_v\in\overline{C_y}\), so the open neighborhood \(\widetilde N\) meets \(C_y\); the \(C_v\)-coordinate can be removed by the action of the Borel group $B$ in \eqref{eq:slice-product}. Conversely, contraction of a point in \(N\cap C_y\) to \(x_v\) implies \(v\leq y\). Also \(N\cap C_v=\{x_v\}\). Thus, if
\[
 Z:=\bigcup_{v<y\leq w}C_y,
\]
then \(Z\) is open in \(X_w\) and \(N\cap Z=N\setminus\{x_v\}\).

For every \(y>v\), choose a point in \(N\cap C_y\). The product neighborhood transfers \(p\)-smoothness of the slice at that point to \(X_w\). Transitivity of the Borel action on \(C_y\) shows that every point of \(C_y\) is \(p\)-smooth. Hence \(Z\) is \(p\)-smooth.

The $T$-equivariant constant sheaf $R_Z$ on $Z$ is parity: stalk restrictions are even and free, and duality gives the corresponding costalk assertion. It is the indecomposable parity extension of the $T$-equivariant constant sheaf on \(C_w\). The restriction and uniqueness theorems for parity sheaves \cite[Proposition~2.11 and Theorem~2.12]{JMW} imply \(\cP_w|_Z\cong R_Z\).

Set \(\cP:=\cP_w|_N\), with inclusions \(i:\{x_v\}\hookrightarrow N\), \(j:U\hookrightarrow N\). Then \(\cP|_U\cong R_U\). Let
\[
 F_1=H_T^\bullet(N;i_*i^!\cP),\qquad F_0=H_T^\bullet(N;\cP).
\]
Contraction identifies \(F_0\) with equivariant cohomology of the stalk at \(x_v\). For \(F_1\), the transverse product \eqref{eq:slice-product} identifies \(i^!\cP\) with the restriction at \(x_v\) of the costalk along \(C_v\). Both equivariant cohomologies are finite free and even by parity. This is also the slice calculation used in \cite[Proposition~8.9]{FW}.

Apply equivariant cohomology to the distinguished triangle
\[
i_*i^!\cP\longrightarrow\cP\longrightarrow Rj_*R_U \longrightarrow.
\]
After extension to the fraction field of \(S\), the map \(F_1\to F_0\) is an isomorphism by localization. Its kernel is therefore \(S\)-torsion, and must be zero because \(F_1\) is free. The long exact sequence gives \eqref{eq:parity-presentation}.

Now assume the rational smoothness of $N$. Coefficient-torsion-freeness is already supplied by Proposition~\ref{prop:schubert-free}. Now Lemma~\ref{lem:integral-sequence} gives \(\pi A=0\), so \(\dim A_{\m}\leq r\) at the maximal ideal $\m=(p,S^{+})$ of $S$. The depth lemma~\cite[Tag~00LX]{Stacks} applied to \eqref{eq:parity-presentation}, over the regular local ring \(S_{\m}\) of dimension \(r+1\), gives \(\depth A_{\m}\geq r\). Thus both \(\dim A_{\m}\) and \(\depth A_{\m}\) equal \(r\), which proves the Cohen--Macaulayness. 
\end{proof}

The elementary denominator argument at the start of Lemma~\ref{lem:denominator} and the rest of Sections~\ref{sec:specialization}--\ref{sec:ordinary} now prove the Schubert torsion formula directly.

\end{document}